   \def\lineno.sty{\texttt{\itshape lineno.sty}}
\def\Box{\vcenter{\vbox{\hrule\hbox{\vrule
     \vbox to 8.8pt{\hbox to 10pt{}\vfill}\vrule}\hrule}}}
\newcommand{\Ff}{{\mathbb F}}
\def\Tr{\operatorname{Tr}}
\newtheorem{thm}{Theorem}[section]
\newtheorem{lem}[thm]{Lemma}
\newtheorem{cor}[thm]{Corollary}
\newtheorem{prop}[thm]{Proposition}
\newtheorem{defn}[thm]{Definition}
\newtheorem{example}[thm]{Example}
\newtheorem{remark}{Remark}
\def\Tr{\operatorname{Tr}}
\numberwithin{equation}{section}
\begin{document}
 \thispagestyle{empty}
\title{Fractional revival on abelian Cayley graphs}

\author[X. Cao, G. Luo]{Xiwang Cao, Gaojun Luo}

\address{Xiwang Cao is with the School of Mathematical Sciences, Nanjing University of Aeronautics and Astronautics, Nanjing 210016, China, and he is also Key Laboratory of Mathematical Modeling and High Performance Computing of Air Vehicles(NUAA), MIIT, Nanjing 210016, China, email: {\tt xwcao@nuaa.edu.cn}}
\address{Gaojun Luo is with School of Physical and Mathematical Sciences, Nanyang Technological University, 21 Nanyang Link, Singapore 637371, Singapore, email: {\tt
gaojun.luo@ntu.edu.sg}}
\thanks{
Xiwang Cao has supported by National Natural Science Foundation of China under grant numbers 11771007, 12171241, 
}

\begin{abstract}
Fractional revival, known as a quantum transport phenomenon, is essential for entanglement generation in quantum spin networks. The concept of fractional revival is a generalization of perfect state transfer and periodicity on graphs. In this paper, we propose a sufficient and necessary condition for abelian Cayley graphs having fractional revival between any two distinct vertices. With this characterization, two general constructions of abelian Cayley graphs having fractional revival is presented. Meanwhile, we establish several new families of abelian Cayley graphs admitting fractional revival.

\noindent {\bf MSC 2010}: 05C25, 81P45, 81Q35.
\end{abstract}

\keywords{fractional revival, Cayley graph, plateaued function, cublike graph}

\maketitle

\section{Introduction }

In quantum information, continuous-time quantum walk is an essential ingredient in transport problems of quantum spin networks. The study of perfect state transfer in quantum spin chains dates back to \cite{Bose2003}. Further research has been established in \cite{Albanese2004,Christandl2004,Christandl2005}. In \cite{ALASTAIR2010}, Kay provided a comprehensive survey on this topic. With entanglement generation in quantum spin chains, the transport problem is said to be fractional revival in \cite{Genest2016}. Investigating fractional revival is both of theoretical and practical significance as it generalizes two quantum transport phenomena, namely, perfect state transfer and periodicity.

Let $\Gamma=(V,E)$ be a connected simple graph with set $V$ of $n$ vertices and set $E$ of edges. Let $A=A(\Gamma)=(a_{u,v})_{u,v\in V}$ be the adjacency matrix of $\Gamma$ defined by
\begin{equation*}
  a_{u,v}=\left\{\begin{array}{cc}
                   1, & \mbox{ if $(u,v)\in E$}, \\
                   0, & \mbox{ otherwise}.
                 \end{array}
  \right.
\end{equation*}
The matrix $A$ is an $n\times n$ symmetric $\{0,1\}$-matrix and all eigenvalues of $A$ are real numbers. The transfer matrix of $\Gamma$ is defined by the following $n\times n$ unitary matrix
\begin{equation*}
  H(t)=\exp(\imath tA)=\sum_{s=0}^{+\infty}\frac{(\imath tA)^s}{s!}=(H_{u,v}(t))_{u,v\in V},\ t\in \mathbb{R}, \imath =\sqrt{-1},
\end{equation*}
where $\mathbb{R}$ is the set of real numbers.
\begin{defn}For two
distinct vertices $x$ and $y$ of $\Gamma$, if
\begin{equation}\label{f-1}
H(t)\mathbf{e}_x=\alpha \mathbf{e}_x+\beta \mathbf{e}_y
\end{equation}
for some complex numbers $\alpha$ and $\beta$ satisfying $\beta\neq 0$ and $|\alpha|^2+|\beta|^2=1$, then the graph $\Gamma$ admits
$(\alpha,\beta)$-fractional revival (or fractional revival, in short, FR) from $x$ to $y$ at time $t$, where $\mathbf{e}_x$ is  the standard vector whose $x$-th entry is $1$ and $0$ otherwhere. Factoring a common unimodular
phase factor, $(\alpha,\beta)$-fractional revival is turned into $e^{\imath \theta}(\alpha,\beta)$-fractional revival, where $\alpha$ and $\theta$ are real
scalars and $\beta$ is complex. If $\alpha=0$, then $(0,\beta)$-fractional revival is reduced to perfect state transfer (in short, PST). The graph $\Gamma$ is said to be periodic at some time $t$ if $\beta=0$ in \eqref{f-1}. \end{defn}


The basic problem on this topic is to find graphs having FR. More specifically, for a given connected simple graph $\Gamma=(V,E)$, and $u,v\in V$, define
\begin{eqnarray}\label{f-2}
  \nonumber T(u,v)&=&\{t\in \mathbb{R}: t>0, H(t)\mathbf{e}_u=\alpha \mathbf{e}_u+\beta \mathbf{e}_v\}.
\end{eqnarray}
 We want to determine $T(u,v)$ for any pair $\{u,v\}$ of vertices, or at least, to determine if $\Gamma$ has FR, that is, $T(u,v)$ is not empty for some pair $\{u,v\}$. PST on various graphs, which is a special case of FR, has been investigated by many researchers. The works in \cite{Basic2009,Cao2020,Cao2020a,Tan_2019,Luo2021,Cheung2011,Coutinho2016,Coutinho2015,Coutinho2015a,Godsil2012,Godsil2012a,Li2021,Zheng2018,Pal2016} and the references therein form a partial list of an already vast literature. With regard to PST, only a few attempts have been made to FR. Genest, Vinet and Zhedanov in \cite{Genest2016} systematically studied quantum spin chains with FR at two sites. Later in \cite{Bernard2018}, a graph exemplified the existence of FR between antipodes. Chan {\it et al.} in \cite{Chan2019} established a general approach to constructing graphs exhibiting FR between cospectral vertices. In addition, they characterized cycles and paths having FR. In a consecutive paper \cite{Chan_2020}, the same authors of \cite{Chan2019} discussed FR on graphs with adjacency matrices from the Bose-Mesner algebra of association schemes. A comprehensive study of FR on graphs between cospectral and strongly cospectral vertices was presented in \cite{Chan2020}. Unlike the work in \cite{Chan2020}, Godsil and Zhang in \cite{Godsil2021} proposed
 a class of graphs admitting FR between non-cospectral vertices. Given a fixed element $a$ of an abelian group $G$, Wang, Wang and Liu in \cite{Wang2022} gave a sufficient and necessary condition for Cayley graphs on $G$ having FR between vertices $x$ and $x+a$. The characterization in \cite{Wang2022} is incomplete as the element $a$ is fixed. Up to now, there is no sufficient and necessary condition for Cayley graphs on abelian groups admitting FR between any two distinct vertices.

 This paper aims at characterizing FR on Cayley graphs over abelian groups between any two distinct vertices. Our contribution begins by Theorem \ref{lem-2} in the next section with a sufficient and necessary condition for abelian Cayley graphs having FR between any two distinct vertices. Based on such a criteria, we propose two general constructions of abelian Cayley graphs admitting FR in Corollaries \ref{thm-1.61} and \ref{thm-2.81}. As a consequence, two families of abelian Cayley graphs admitting FR are provided in respective Propositions \ref{thm-1.6} and \ref{thm-2.8}. Furthermore, Propositions \ref{thm-1.6} and \ref{thm-2.8} tell us that the pool of feasible graphs having FR is larger than that of graphs admitting PST. In Section \ref{S3}, we propose the definition of plateaued functions over abelian groups. With the help of these functions, a family of abelian Cayley graphs having FR is built in Theorem \ref{thm-3.6}. In Section \ref{S4}, we discuss FR on cublike graphs. A flexible construction of cublike graphs admitting FR is given in Theorem \ref{thm-4.1}. Utilizing bent or semi-bent functions over finite fields of even characteristics, we present the last construction of cublike graphs having FR in Theorem \ref{bentconstruction}. Section \ref{S5} concludes this paper.

\section{A characterization of abelian Cayley graphs having FR}
Let $G$ be an abelian group of order $n$ and let $S$ be a subset of $G$ with $S=-S$ and $G=\langle S\rangle$. Let $\mathbb{C}$, $\mathbb{Z}$ and $\mathbb{Q}$ be, respectively, the set of complex numbers, the set of integers and the set of rational numbers. Let $\Gamma={\rm Cay}(G,S)$ be an abelian Cayley graph whose vertex set is $G$ and two vertices, $u,v$, form an edge of $\Gamma$ if $u-v\in S$. The adjacency matrix of $\Gamma$ is an $n\times n$ matrix $A=(a_{g,h})$ defined by
\begin{equation*}
  a_{g,h}=\left\{\begin{array}{cc}
                  1, & \mbox{ if $g-h\in S$}, \\
                  0, & \mbox{ otherwise}.
                \end{array}
  \right.
\end{equation*}
An automorphism of $\Gamma$ is a bijection from $G$ to $G$ preserving the adjacency. The set of automorphism of $\Gamma$ is denoted by ${\rm Aut}(\Gamma)$. For every $z\in G$, define $\sigma_z: G\rightarrow G; x\mapsto x+z$. It is easy to see that $\sigma_z\in {\rm Aut}(\Gamma)$ and $G$ is a subgroup of ${\rm Aut}(\Gamma)$.

Each finite abelian group $G$ can be decomposed as a direct sum of cyclic groups:
\begin{equation*}
  G=\mathbb{Z}_{n_1}\oplus \cdots\oplus \mathbb{Z}_{n_r},\ \ n_s\geq 2,
\end{equation*}
where $\mathbb{Z}_m=(\mathbb{Z}/m\mathbb{Z},+)$ is a cyclic group of order $m$. For every $x=(x_1,\cdots,x_r)\in G$ with $x_s\in \mathbb{Z}_{n_s}$, the mapping
\begin{equation*}
  \chi_x:G\rightarrow \mathbb{C},\  \chi_x(g)=\prod_{s=1}^r\omega_{n_s}^{x_sg_s}, \  g=(g_1,\cdots,g_r)\in G
\end{equation*}
is a character of $G$, where $\omega_{n_s}=\exp(2\pi i/n_s)$ is a primitive $n_s$-th root of unity in $\mathbb{C}$. The set of characters of $G$, denoted by $\hat{G}$, forms a group under the following operation:
\begin{equation*}
  \chi_g\chi_h: G\rightarrow \mathbb{C}; x\mapsto \chi_g(x)\chi_h(x), \forall g,h\in G.
\end{equation*}

Let $\hat{G}$ be the character group of $G$. Recall that for any abelian group $G$, $G$ is isomorphic to its dual group via $G\rightarrow \hat{G}, g\mapsto \chi_g$. It is known that the eigenvalues of $A$ are the real numbers
\begin{equation*}
  \lambda_g=\sum_{s\in S}\chi_g(s), \forall g \in {G}.
\end{equation*}

Let $P=\frac{1}{\sqrt{n}}(\chi_g(h))_{g,h\in G}$ and $E_z=p_z^*p_z$ with $p_z$ being the $z$-th column of $P$ and $p_z^*$ being the conjugate transpose of $p_z$. The spectral decomposition of $A$ is given by
\begin{equation}\label{f-3}
  A=\sum_{g\in G}\lambda_gE_g.
\end{equation}
Since $\{E_z: z\in G\}$ forms a set of idempotents, namely,
\begin{equation*}
  E_gE_h=\left\{\begin{array}{cc}
                  E_g, & \mbox{ if $g=h$},\\
                  0, & \mbox{otherwise},
                \end{array}
  \right.
\end{equation*}
we have the spectral decomposition of $H(t)$ as follows:
\begin{equation}\label{f-4}
  H(t)=\sum_{g\in G}e^{\imath \lambda_g t}E_g.
\end{equation}
Thus the $(g,h)$-th entry of $H(t)$ is
\begin{equation}\label{f-5}
  H(t)_{g,h}=\frac{1}{n}\sum_{z\in G}e^{\imath t\lambda_z}\chi_z(g-h).
\end{equation}
If FR happens on $\Gamma={\rm Cay}(G,S)$ from a vertex $x$ to a vertex $y$, then by (\ref{f-1}) and noticing that $H(t)$ is a unitary matrix, the $x$-th row of $H(t)$ is determined by
\begin{equation}\label{f-6}
  H(t)_{x,u}=\left\{\begin{array}{cc}
                      \alpha, & \mbox{ if $u=x$}, \\
                      \beta, & \mbox{ if $u=y$}, \\
                      0, & \mbox{otherwise}.
                    \end{array}
  \right.
\end{equation}
Since $A$ is an $G$-invariant matrix, namely, $a_{g+z,h+z}=a_{g,h}, \forall g,h,z\in G$, we know that $H(t)$ is also $G$-invariant.
Therefore, we have the following preliminary result.
\begin{lem}\label{lem-1}Let $G$ be an abelian group of order $n$ and let $S$ be a subset of $G$. Let $\Gamma={\rm Cay}(G,S)$ be a Cayley graph with the adjacency matrix $A$. Then $\Gamma$ has $(\alpha,\beta)$-FR from a vertex $x$ to a vertex $y$ at some time $t$ if and only if
\begin{equation}\label{f-9}
  H(t)=\alpha I_n+\beta Q,
\end{equation}
where $Q$ is a permutation matrix satisfying $Q^T=Q$ and $Q^2=I_n$, the identity matrix of order $n$. Moreover, $Q$ has no fixed points and $\alpha \overline{\beta}+\overline{\alpha}\beta=0$.
\end{lem}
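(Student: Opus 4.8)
The plan is to exploit three structural features of $H(t)=\exp(\imath t A)$: it is unitary, it is (complex) symmetric since $A^T=A$, and it is $G$-invariant because $A$ is, as already noted in the text preceding the lemma. Concretely, writing $\sigma_z$ for the translation-by-$z$ permutation matrix (with $(\sigma_z)_{g,h}=1$ iff $g-h=z$), the invariance means that the entry $H(t)_{g,h}$ depends only on the difference $g-h$; so there is a single function $f\colon G\to\mathbb{C}$ with $H(t)_{g,h}=f(g-h)$, and symmetry of $H(t)$ translates into $f(-d)=f(d)$ for all $d\in G$. This reduction of the whole matrix to one function of the difference is the engine of the proof.

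For the forward implication, suppose $\Gamma$ has $(\alpha,\beta)$-FR from $x$ to $y$. Equation \eqref{f-6} records the entire $x$-row of $H(t)$, and reading it through $H(t)_{x,u}=f(x-u)$ pins down $f$ completely: setting $a:=x-y\neq 0$, one gets $f(0)=\alpha$, $f(a)=\beta$, and $f(d)=0$ for every other $d\in G$. First I would therefore put $Q:=\sigma_a$, so that $H(t)=\alpha I_n+\beta Q$ is immediate. The symmetry of $H(t)$ now forces the key arithmetic fact: $f(-a)=f(a)=\beta\neq 0$, so $-a$ must lie in the support $\{0,a\}$ of $f$; since $a\neq 0$ rules out $-a=0$, this gives $-a=a$, that is, $2a=0$. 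From $2a=0$ the three required properties of $Q$ drop out at once: $Q^T$ is the matrix of $\sigma_{-a}=\sigma_a$, so $Q^T=Q$; $\sigma_a^2=\sigma_{2a}=\sigma_0$, so $Q^2=I_n$; and $Q$ is fixed-point-free because $a\neq 0$.

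It then remains to extract the phase relation and to reverse the argument. Expanding the unitarity of $H(t)$ using $Q^*=Q^T=Q$ and $Q^2=I_n$ yields
\begin{equation*}
I_n=H(t)^*H(t)=(\overline{\alpha}I_n+\overline{\beta}Q)(\alpha I_n+\beta Q)=(|\alpha|^2+|\beta|^2)\,I_n+(\alpha\overline{\beta}+\overline{\alpha}\beta)\,Q,
\end{equation*}
and since $I_n$ and $Q$ are linearly independent (as $a\neq 0$) this forces both $|\alpha|^2+|\beta|^2=1$ and $\alpha\overline{\beta}+\overline{\alpha}\beta=0$. For the converse, if $H(t)=\alpha I_n+\beta Q$ with $Q$ a fixed-point-free permutation matrix and $\beta\neq 0$, then the $x$-th column of $H(t)$ is $H(t)\mathbf{e}_x=\alpha\mathbf{e}_x+\beta Q\mathbf{e}_x=\alpha\mathbf{e}_x+\beta\mathbf{e}_y$ for the unique $y=Q(x)\neq x$, which is precisely \eqref{f-1}. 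The only genuinely substantive step is the deduction $2a=0$ from symmetry; everything else is bookkeeping with translation matrices. I expect this to be the point to handle most carefully, since it is exactly what guarantees that the single target $y$ produced by FR is matched \emph{involutively} and fixed-point-freely across all of $G$, rather than merely along the row through $x$.
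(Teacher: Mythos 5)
Your proposal is correct and takes essentially the same route as the paper's proof: in both, $G$-invariance propagates the single FR row \eqref{f-6} to the full matrix identity $H(t)=\alpha I_n+\beta Q$, symmetry of $H(t)$ forces $Q$ to be symmetric (equivalently, your $2a=0$), and unitarity yields $\alpha\overline{\beta}+\overline{\alpha}\beta=0$, while the converse is the same direct read-off of $H(t)\mathbf{e}_x$. The only cosmetic difference is that you identify $Q$ explicitly as the translation matrix $\sigma_a$ and get $Q^2=I_n$ from $2a=0$, whereas the paper extracts both $Q^2=I_n$ and the phase relation from the unitarity equation $\alpha\overline{\alpha}I_n+\beta\overline{\beta}Q^2+(\alpha\overline{\beta}+\overline{\alpha}\beta)Q=I_n$.
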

\begin{proof}Suppose that $ H(t)=\alpha I_n+\beta Q$ for some $t$. By the fact that $Q$ is a permutation matrix with no fixed points, we can assume that there is some $a\neq 0$ such that
\begin{equation*}
  Q_{0,a}=1, Q_{0,b}=0, \forall b\neq 0.
\end{equation*}
Then we have
\begin{equation*}
  H(t)_{0,0}=\alpha, H(t)_{0,a}=\beta, \mbox{ and } H(t)_{0,b}=0, \forall b\neq 0, a.
\end{equation*}
Therefore the graph $\Gamma$ exhibits $(\alpha,\beta)$-FR from $0$ to $a$.

Conversely, if $\Gamma$ has $(\alpha,\beta)$-FR from $x$ to $y$, then
\begin{equation*}
  H(t)\mathbf{e}_x=\alpha \mathbf{e}_x+\beta \mathbf{e}_y,
\end{equation*}
which implies that
\begin{equation*}
  H(t)_{x,x}=\alpha, H(t)_{x,y}=\beta \mbox{ and } H(t)_{x,b}=0, \forall b\neq x, y.
\end{equation*}
Since $H(t)$ is $G$-invariant, we have that for every $z\in G$,
\begin{equation*}
  H(t)_{x+z,x+z}=\alpha, H(t)_{x+z,y+z}=\beta \mbox{ and } H(t)_{x+z,b+z}=0, \forall b\neq x, y.
\end{equation*}
When $z$ runs through $G$, we get that
 $H(t)=\alpha I_n+\beta Q$, where $Q$ is a permutation matrix with no fixed points. The matrix $Q$ is symmetric as $H(t)$ is a symmetric matrix. Since $H(t)$ is a unitary matrix, we obtain that
 \begin{equation*}
   H(t)\overline{H(t)}=I_n\Leftrightarrow \alpha \overline{\alpha}I_n+\beta \overline{\beta}Q^2+(\alpha \overline{\beta}+\overline{\alpha}\beta)Q=I_n.
 \end{equation*}
 Therefore, we have
 \begin{equation*}
   \left\{\begin{array}{l}
            Q^2=I_n, \\
           \alpha \overline{\beta}+\overline{\alpha}\beta=0.
          \end{array}
   \right.
 \end{equation*}
 This completes the proof.
\end{proof}

\textbf{From now on, we only consider the case of $(\alpha,\beta)$-FR with $\alpha\neq 0$. That is, we always assume that $\alpha \beta\neq 0$.}

Recall that $H(t)$ is $G$-invariant, if FR happens on $\Gamma={\rm Cay}(G,S)$ from a vertex $x$ to a vertex $y$, then we have
$H(t)_{x,u}=H(t)_{0,u-x}$, thus we can write
\begin{equation}\label{f-7}
  H(t)_{0,u}=\frac{1}{n}\sum_{z\in G}e^{\imath t\lambda_z}{\chi_z(u)}=\left\{\begin{array}{cc}
                      \alpha, & \mbox{ if $u=0$}, \\
                      \beta, & \mbox{ if $u=y-x$}, \\
                      0, & \mbox{otherwise}.
                    \end{array}
  \right.
\end{equation}
Denote $a=x-y$. According to \eqref{f-7} and its Fourier inversion, we get
\begin{equation}\label{f-81}
  e^{\imath \lambda_g t}=\alpha+\overline{\chi_a(g)}\beta, \forall g\in G.
\end{equation}
Since $\lambda_g$ is real for every $g\in G$, we derive that
\begin{equation*}
  1=e^{\imath \lambda_g t}\overline{e^{\imath \lambda_g t}}=(\alpha+\overline{\chi_a(g)}\beta)(\overline{\alpha}+{\chi_a(g)}\overline{\beta})=1+\overline{\chi_a(g)}\overline{\alpha}\beta+{\chi_a(g)}\alpha\overline{\beta},
\end{equation*}
which is equivalent to
\begin{equation}\label{f-10}
  \chi_a(g){\alpha}\overline{\beta}+\overline{\chi_a(g)}\overline{\alpha}{\beta}=0.
\end{equation}
Combing with $\overline{\alpha}\beta+\alpha\overline{\beta}=0$ in Lemma \ref{lem-1}, we have
\begin{equation}\label{f-11}
  \chi_a(g)=\overline{\chi_a(g)}, \mbox{ i.e., } \chi_a(g)=\pm 1, \forall g\in G.
\end{equation}
That is, $a$ is of order two in $G$. As a consequence, FR cannot happen on any abelian Cayley graph with the underlying group has an odd order.

Define two subsets of $G$ as follows,
\begin{equation}\label{f-12}
  G_0=\{g\in G: \chi_a(g)=1\}, G_1=\{g\in G: \chi_a(g)=-1\}.
\end{equation}
Then $G_0$ is a subgroup of $G$ of order $n/2$ and $G_1$ is a coset of $G_0$.
By (\ref{f-81}), we have
\begin{equation}\label{f-13}
  e^{\imath t \lambda_g}=\left\{\begin{array}{cc}
                                  \alpha+\beta, & \mbox{ if $g\in G_0$},\\
                                 \alpha-\beta, & \mbox{ if $g\in G_1$}.
                                \end{array}
  \right.
\end{equation}
Assume the polar decompositions of $\alpha$ and $\beta$ are respective $r_\alpha e^{\imath \theta_\alpha}$ and $r_\beta e^{\imath \theta_\beta}$. According to $|\alpha|^2+|\beta|^2=1$ and $\overline{\alpha}\beta+\alpha\overline{\beta}=0$, we have
\begin{equation}\label{f-14}
  \left\{\begin{array}{ll}
           r_\alpha^2+r_\beta^2 &=1 \\
           \theta_\alpha-\theta_\beta & =\frac{\pi}{2}+k\pi, \mbox{ for some $k\in \mathbb{Z}$}.         \end{array}
  \right.
\end{equation}
Thus
\begin{equation*}
  \alpha\pm\beta=e^{\imath \theta_\beta}((-1)^kr_\alpha \imath\pm r_\beta),
\end{equation*}
Write $r_\beta+(-1)^kr_\alpha \imath =e^{\imath \theta_0}$. Then $\theta_0\in \mathbb{R}$ and
\begin{equation}\label{f-15}
  \alpha+\beta=e^{\imath(\theta_0+\theta_\beta)}, \alpha-\beta=e^{\imath({\pi}-\theta_0+\theta_\beta)}.
\end{equation}
Write $t=2\pi T$ and $\theta_0=2\pi u_0, \theta_\beta=2\pi u_\beta$. Due to (\ref{f-13}), it holds that
\begin{equation}\label{f-16}
  T\lambda_g-u_0-u_\beta\in \mathbb{Z}, \forall g\in G_0, \mbox{ and } T\lambda_g+u_0-u_\beta\in \frac{1}{2}+\mathbb{Z}, \forall g\in G_1.
\end{equation}
Therefore,
\begin{equation*}
  T\left(\sum_{g\in G_0}\lambda_g+\sum_{g\in G_1}\lambda_g\right)-nu_\beta\in \frac{n}{4}+\mathbb{Z}.
\end{equation*}
Since $\sum_{g\in G_0}\lambda_g+\sum_{g\in G_1}\lambda_g=\sum_{g\in G}\lambda_g=0$, we have $u_\beta\in \mathbb{Q}$.

Now, we arrive at
\begin{equation*}
  \sum_{g\in G_0}\lambda_g=\sum_{g\in G_0}\sum_{s\in S}\chi_g(s)=\sum_{s\in S}\sum_{g\in G_0}\chi_g(s)=\sum_{s\in S}\sum_{g\in G}\frac{1+\chi_a(g)}{2}\chi_g(s)=\left\{\begin{array}{cc}
                                                                                                                                                                         n/2, & \mbox{ if $a\in S$,} \\
                                                                                                                                                                         0, & \mbox{ otherwise}.
                                                                                                                                                                       \end{array}
  \right.
\end{equation*}
Thus, $\sum_{g\in G_0}\lambda_g\in \mathbb{Z}$ and so $\sum_{g\in G_1}\lambda_g\in \mathbb{Z}$ as well.

Let $d=|S|$. Thanks to (\ref{f-16}), we deduce that
\begin{equation*}
  T\left(n/2\cdot d+\sum_{g\in G_1}\lambda_g\right)-n/2 \cdot u_\beta\in n/4+\mathbb{Z},
\end{equation*}
Thus we obtain that $T\in \mathbb{Q}$.
By (\ref{f-16}) again, we have $Td-u_0-u_\beta\in \mathbb{Z}$ which further implies that $u_0\in \mathbb{Q}$, and finally, we get that $\lambda_g\in \mathbb{Q}$ for every $g\in G$. Since $\lambda_g$ is an algebraic integer, we know that $\lambda_g\in \mathbb{Z}$. In other words, $\Gamma$ is an integral graph.
Meanwhile, since $T$ is rational, $e^{\imath t}$ is a root of unity.
Therefore, we have the following result.

\begin{thm}\label{lem-2}Let $G$ be an abelian group of an even order $n$ and let $S$ be a symmetric subset of $G$. Let $\Gamma={\rm Cay}(G,S)$ be a Cayley graph with the adjacency matrix $A$. Suppose that $\alpha$ and $\beta$ are two nonzero complex numbers. Then $\Gamma$ has $(\alpha,\beta)$-FR from a vertex $x$ to a vertex $y$ at some time $t$ if and only if the following conditions hold.

(1) $a=x-y$ is of order two;


(2) $\Gamma$ is an integral graph;

(3) for every $g\in G_0$, $e^{\imath t \lambda_g}=\alpha+\beta$; for every $g\in G_1$, $e^{\imath t \lambda_g}=\alpha-\beta$,
where $G_0$ and $G_1$ are defined by (\ref{f-12}). \end{thm}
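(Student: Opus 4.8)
The plan is to prove the two implications separately, observing that the forward direction has essentially been assembled in the computations preceding the statement. For necessity of (1)--(3) I would collect that chain of deductions. Starting from the definition of FR together with the $G$-invariance of $H(t)$, one reduces the condition to the single-row identity \eqref{f-7}, and Fourier inversion converts it into \eqref{f-81}. Requiring each $\lambda_g$ to be real forces $\chi_a(g)=\overline{\chi_a(g)}$, i.e. $\chi_a(g)=\pm1$, so $a$ has order two; this is (1), and it simultaneously produces the partition $G=G_0\sqcup G_1$ of \eqref{f-12} together with \eqref{f-13}, which is precisely (3). Condition (2) then comes from the rationality argument already indicated: writing $t=2\pi T$ and comparing the eigenvalue sums over $G_0$ and $G_1$ through \eqref{f-16} forces $T,u_0,u_\beta\in\mathbb{Q}$, whence $\lambda_g\in\mathbb{Q}$ for every $g$; since each $\lambda_g$ is an algebraic integer, $\lambda_g\in\mathbb{Z}$ and $\Gamma$ is integral.

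For sufficiency I would assume (1)--(3)---in fact only (1) and (3) are needed---and verify the FR identity directly from \eqref{f-5}. By $G$-invariance it suffices to compute $H(t)_{0,u}=\frac1n\sum_{z\in G}e^{\imath t\lambda_z}\chi_z(u)$. Inserting (3) and separating the two eigenvalue values gives
\[
H(t)_{0,u}=\frac{\alpha}{n}\sum_{z\in G}\chi_z(u)+\frac{\beta}{n}\left(\sum_{z\in G_0}\chi_z(u)-\sum_{z\in G_1}\chi_z(u)\right).
\]
By orthogonality of characters the first sum equals $n$ when $u=0$ and $0$ otherwise. For the bracketed term, the key observation is that $\chi_a(z)=1$ on $G_0$ and $\chi_a(z)=-1$ on $G_1$, so it equals $\sum_{z\in G}\chi_a(z)\chi_z(u)=\sum_{z\in G}\chi_z(a+u)$, using $\chi_a(z)=\chi_z(a)$ and that each $\chi_z$ is a homomorphism; this is $n$ when $a+u=0$ and $0$ otherwise. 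Since $a$ has order two, $a+u=0$ means $u=a$. Hence $H(t)_{0,u}$ equals $\alpha,\ \beta,\ 0$ according as $u=0,\ u=a,$ or otherwise, which by Lemma \ref{lem-1} is $(\alpha,\beta)$-FR from $0$ to $a$; translating by $x$ and using $2a=0$ (so that $x+a=x-a=y$) gives FR from $x$ to $y$. The normalization $|\alpha|^2+|\beta|^2=1$ is automatic here, since (3) forces $|\alpha+\beta|=|\alpha-\beta|=1$.

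I expect the only genuinely delicate point to be the necessity of integrality in (2): passing from ``$e^{\imath t\lambda_g}\in\{\alpha+\beta,\alpha-\beta\}$ for some real $t$'' to ``$\lambda_g\in\mathbb{Z}$'' is not formal, as it hinges on extracting the interlocked rationality of $T$, $u_0$ and $u_\beta$ from \eqref{f-16}---for which the value $\sum_{g\in G_0}\lambda_g\in\{n/2,0\}$ (according as $a\in S$ or not) is essential---and only afterwards invoking that the $\lambda_g$ are algebraic integers. By contrast, the converse is a clean orthogonality computation whose entire content is the identity $\sum_{z\in G}\chi_a(z)\chi_z(u)=\sum_{z\in G}\chi_z(a+u)$ together with $2a=0$.
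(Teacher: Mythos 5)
Your proposal is correct and follows essentially the same route as the paper: the necessity is exactly the paper's chain of deductions preceding the theorem (Fourier inversion to \eqref{f-81}, reality forcing $\chi_a(g)=\pm 1$, then the rationality/algebraic-integer argument for integrality), and the sufficiency is the same character-orthogonality computation, which you merely organize more compactly via the single identity $\sum_{z\in G}\chi_a(z)\chi_z(u)=\sum_{z\in G}\chi_z(a+u)$ instead of the paper's three separate cases. Your side remarks --- that only (1) and (3) are needed for sufficiency, and that $|\alpha|^2+|\beta|^2=1$ follows automatically from $|\alpha\pm\beta|=1$ --- are both accurate and consistent with the paper's proof.
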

\begin{proof}The necessity has been proved by the foregoing discussion. Next, we prove the sufficiency. Since for every $g\in G_0$, $e^{\imath t \lambda_g}=\alpha+\beta$, and for every $g\in G_1$, $e^{\imath t \lambda_g}=\alpha-\beta$, by (\ref{f-5}), we get that, for every $x\in G$,
\begin{equation*}
  H(t)_{x,x}=\frac{1}{n}\sum_{z\in G}e^{\imath t \lambda_z}=\frac{1}{n}\left[\sum_{z\in G_0}(\alpha+\beta)+\sum_{z\in G_1}(\alpha-\beta)\right]=\alpha.
\end{equation*}
Meanwhile,
\begin{eqnarray*}
   H(t)_{x,x+a}&=&\frac{1}{n}\sum_{z\in G}e^{\imath t \lambda_z}\chi_z(a)\\
   &=&\frac{1}{n}\left[\sum_{z\in G_0}(\alpha+\beta)\chi_z(a)+\sum_{z\in G_1}(\alpha-\beta)\chi_z(a)\right]\\
   &=&\frac{\alpha}{n}\sum_{z\in G}\chi_z(a)+\frac{\beta}{n}\left[\sum_{z\in G_0}\chi_z(a)-\sum_{z\in G_1}\chi_z(a)\right].
\end{eqnarray*}
Due to $a$ is of order two, and $\sum_{z\in G}\chi_z(a)=0$, we get
\begin{equation*}
   H(t)_{x,x+a}=\frac{2\beta}{n}\sum_{z\in G_0}\chi_z(a)=\frac{2\beta}{n}\sum_{z\in G_0}1=\beta.
\end{equation*}
Moreover, for every $b\neq 0,a$, we have
\begin{eqnarray*}
   H(t)_{x,x+b}&=&\frac{1}{n}\sum_{z\in G}e^{\imath t \lambda_z}\chi_z(b)\\
   &=&\frac{1}{n}\left[\sum_{z\in G_0}(\alpha+\beta)\chi_z(b)+\sum_{z\in G_1}(\alpha-\beta)\chi_z(b)\right]\\
   &=&\frac{\alpha}{n}\sum_{z\in G}\chi_z(b)+\frac{\beta}{n}\left[\sum_{z\in G_0}\chi_z(b)-\sum_{z\in G_1}\chi_z(b)\right]\\
   &=&\frac{2\beta}{n}\sum_{z\in G_0}\chi_z(b)\\
   &=&\frac{2\beta}{n}\sum_{z\in G}\frac{1+\chi_z(a)}{2}\chi_z(b)\\
   &=&\frac{\beta}{n}\sum_{z\in G}\chi_z(a+b)\\
   &=&0.
\end{eqnarray*}
Therefore, we have $|\alpha|^2+|\beta|^2=1$, and $H(t)=\alpha I_n+\beta Q$, where $Q=\sum_{g\in G}e_ge_{g+a}^T$ is a symmetric permutation matrix. By Lemma \ref{lem-1}, $\Gamma$ has $(\alpha,\beta)$-FR at time $t$.
\end{proof}

\begin{remark}
According to Definition \ref{f-1}, the existence of $(\alpha,\beta)$-FR is equivalent to the existence of $e^{\imath \theta}(\alpha,\beta)$-FR with $\theta,\alpha$ being real numbers and $\beta$ being a complex number. Based on Theorem \ref{lem-2}, we claim that $\theta$ is a rational number and $\beta$ is a pure complex number if $\Gamma={\rm Cay}(G,S)$ has $e^{\imath \theta}(\alpha,\beta)$-FR. Due to Lemma \ref{lem-1}, it is easy to check that $\beta$ is a pure complex number. By \eqref{f-13}, we obtain that
\begin{equation*}
  e^{\imath t \lambda_g-\imath \theta}=\left\{\begin{array}{cc}
                                  \alpha+\beta, & \mbox{ if $g\in G_0$},\\
                                 \alpha-\beta, & \mbox{ if $g\in G_1$}.
                                \end{array}
  \right.
\end{equation*}
Since $\alpha$ is a real number and $\beta$ is a pure complex number, we deduce that $e^{\imath t (|S|+\lambda_g)-2\imath \theta}=1$ with $g\in G_1$. Thanks to Items (2) and (3) in Theorem \ref{lem-2}, $\theta$ is a rational number.
\end{remark}

In what follows, we continue to investigate the conditions in Theorem \ref{lem-2}.

Taking an fixed element $g_1$ in $G_1$, by (\ref{f-13}), we have
\begin{equation}\label{f-18}
  T(d-\lambda_g)\in \mathbb{Z} \mbox{ for every $g\in G_0$ and }T(\lambda_g-\lambda_{g_1})\in \mathbb{Z} \mbox{ for every $g\in G_1$}.
\end{equation}
Since $\Gamma$ is an integral graph, we define the following integers:
\begin{equation}\label{f-19}
  M_0=\gcd(d-\lambda_g: g\in G_0), M_1=\gcd(\lambda_{g_1}-\lambda_g: g\in G_1), M=\gcd(M_0,M_1).
\end{equation}
(\ref{f-18}) is equivalent to
\begin{equation*}
  TM_0, TM_1\in \mathbb{Z},
\end{equation*}
and thus
\begin{equation}\label{f-20}
  T\in \frac{1}{M}\mathbb{Z}.
\end{equation}
Next, we show that $M$ is a divisor of $n=|G|$.
\begin{lem}\label{lem-M} Let $G$ be an abelian group of order $n$ and let ${\rm Cay}(G,S)$ be an integral graph. Then $M$ is a divisor of $n$, where $M$ is defined by (\ref{f-19}).
\end{lem}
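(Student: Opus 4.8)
The plan is to read off the divisibility $M\mid n$ from the Fourier inversion that recovers the adjacency entries from the spectrum, after reducing the eigenvalues modulo $M$ separately on the two pieces $G_0$ and $G_1$. First I would record the inversion formula: exactly as in \eqref{f-5}, but with $H(t)$ replaced by the adjacency matrix via the spectral decomposition \eqref{f-3}, one has $a_{g,0}=\frac{1}{n}\sum_{z\in G}\lambda_z\chi_z(g)$, and since $a_{g,0}=1$ precisely when $g\in S$ (and $0\notin S$),
\begin{equation*}
\sum_{z\in G}\lambda_z\chi_z(g)=\begin{cases} n, & g\in S,\\ 0, & g\notin S.\end{cases}
\end{equation*}
By the definition \eqref{f-19} we have $M\mid M_0$ and $M\mid M_1$, so that $\lambda_z\equiv d\pmod M$ for $z\in G_0$ and $\lambda_z\equiv\lambda_{g_1}\pmod M$ for $z\in G_1$.

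Next I would split the inversion sum over $G_0$ and $G_1$ and substitute $\lambda_z=d+Mu_z$ on $G_0$ and $\lambda_z=\lambda_{g_1}+Mv_z$ on $G_1$, with $u_z,v_z\in\mathbb{Z}$. This yields
\begin{equation*}
\sum_{z\in G}\lambda_z\chi_z(g)=d\sum_{z\in G_0}\chi_z(g)+\lambda_{g_1}\sum_{z\in G_1}\chi_z(g)+M\sum_{z\in G}\varepsilon_z\chi_z(g),
\end{equation*}
where $\varepsilon_z\in\{u_z,v_z\}\subset\mathbb{Z}$, so the last term is $M$ times an algebraic integer lying in the ring $\mathbb{Z}[\omega]$ generated by the roots of unity $\omega_{n_s}$. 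The two coefficient sums are character sums over the index-two subgroup $G_0$ and its coset $G_1$: using $\chi_z(g)=\chi_g(z)$ and orthogonality, $\sum_{z\in G_0}\chi_z(g)$ equals $n/2$ when $\chi_g$ is trivial on $G_0$ and $0$ otherwise. The annihilator of $G_0$ has order $[G:G_0]=2$ and contains $a$ (because $G_0=\{z:\chi_z(a)=1\}$ forces $\chi_a(z)=1$ for all $z\in G_0$), hence equals $\{0,a\}$; consequently $\sum_{z\in G_0}\chi_z(g)=\sum_{z\in G_1}\chi_z(g)=0$ for every $g\notin\{0,a\}$.

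Finally I would specialize $g$. Since $G=\langle S\rangle$ and $a$ has order two, for $n>2$ the set $S$ cannot be contained in $\{a\}$, so there is some $g\in S$ with $g\neq a$ (and automatically $g\neq 0$). For this $g$ the two coefficient sums vanish, while the left-hand side equals $n$; therefore $n=M\sum_{z\in G}\varepsilon_z\chi_z(g)$. Thus $M$ times an algebraic integer equals the rational integer $n$, and since $\mathbb{Z}[\omega]\cap\mathbb{Q}=\mathbb{Z}$ the quotient $n/M$ is both rational and an algebraic integer, hence an ordinary integer, giving $M\mid n$. The degenerate case $n=2$ (where $S=\{a\}$, $G\cong\mathbb{Z}_2$ and $M=0$) can be excluded at the outset.

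I expect the main obstacle to be the concluding descent: the reduction ``modulo $M$'' a priori takes place in the cyclotomic ring $\mathbb{Z}[\omega]$ rather than in $\mathbb{Z}$, so one must argue cleanly via $M\mathbb{Z}[\omega]\cap\mathbb{Z}=M\mathbb{Z}$ instead of pretending the character sums reduce term by term as integers. The structural fact that makes the argument close is the vanishing of the subgroup and coset character sums away from $\{0,a\}$, combined with the existence of a generator $g\in S$ distinct from $a$, which is exactly what isolates the term $n$ on the right-hand side.
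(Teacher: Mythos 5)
Your proof is correct and follows essentially the same route as the paper's: Fourier inversion of the spectrum at an element of $S$ outside $\{0,a\}$, substitution of the eigenvalues modulo $M$ on $G_0$ and $G_1$, vanishing of the character sums over $G_0$ and its coset, and the conclusion that $n/M$ is simultaneously rational and an algebraic integer, hence in $\mathbb{Z}$. Your treatment is in fact slightly more careful than the paper's, since you justify the existence of $g\in S\setminus\{0,a\}$ via $G=\langle S\rangle$ and explicitly exclude the degenerate case $n=2$, neither of which the paper addresses.
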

\begin{proof}Write
\begin{equation*}
  d=\lambda_g+M t_g \mbox{ for every $g\in G_0$ and $\lambda_g=\lambda_{g_1}+Ms_g$ for every $g\in G_1$, where $t_g,s_g\in \mathbb{Z}$}.
\end{equation*}
Take an element $z\in S$ with $z\neq 0,a$. Then
\begin{equation}\label{f-21}
  \sum_{g\in G}\lambda_g\overline{\chi_g(z)}=\sum_{s\in S}\sum_{g\in G}\chi_g(s-z)=n.
\end{equation}
Meanwhile,
\begin{eqnarray}\label{f-22}
 \nonumber \sum_{g\in G}\lambda_g\overline{\chi_g(z)}&=&\sum_{g\in G_0}(d-t_gM)\overline{\chi_g(z)}+\sum_{g\in G_0}(\lambda_{g_1}-s_gM)\overline{\chi_g(z)}\\
 &=&(d+\lambda_{g_1})\sum_{g\in G_0}\overline{\chi_g(z)}-M\sum_{g\in G_1}(t_g+s_g)\overline{\chi_g(z)}.
\end{eqnarray}
Moreover,
\begin{equation}\label{f-23}
  \sum_{g\in G_0}\overline{\chi_g(z)}=\sum_{g\in G}\frac{1+\chi_g(a)}{2}\overline{\chi_g(z)}=\sum_{g\in G}\chi_g(a-z)=0.
\end{equation}
(\ref{f-21})-(\ref{f-23}) imply that
\begin{equation}\label{f-24}
 n/M=-\sum_{g\in G_1}(t_g+s_g)\overline{\chi_g(z)}.
\end{equation}
The LHS of (\ref{f-24}) is in $\mathbb{Q}$, while the RHS is an algebraic integer. Thus $n/M\in \mathbb{Z}$.
\end{proof}

By Lemma \ref{lem-M}, we have the following result.

\begin{cor}
Let $G$ be an abelian group of order $n$ and let ${\rm Cay}(G,S)$ be an integral graph. If ${\rm Cay}(G,S)$ has FR at some time $t$, then $e^{\imath t}$ is an $n$-th root of unity.
\end{cor}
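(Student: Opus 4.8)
The plan is to assemble the corollary directly from two facts already in hand: the containment $T \in \frac{1}{M}\mathbb{Z}$ recorded in (\ref{f-20}), and the divisibility $M \mid n$ proved in Lemma \ref{lem-M}. First I would note that the hypothesis of FR supplies, via Theorem \ref{lem-2}(1), an element $a = x - y$ of order two; this is precisely the element used to define the subgroup $G_0$, its coset $G_1$, and hence the integers $M_0, M_1, M$ of (\ref{f-19}). Since the graph is assumed integral, Lemma \ref{lem-M} applies and gives $n = Mm$ for some positive integer $m$.

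Next I would recall that throughout the derivation we set $t = 2\pi T$, so that $e^{\imath t} = e^{2\pi \imath T}$. By (\ref{f-20}) we may write $T = k/M$ with $k \in \mathbb{Z}$, whence $(e^{\imath t})^M = e^{2\pi \imath k} = 1$; that is, $e^{\imath t}$ is an $M$-th root of unity. Raising to the power $m$ and using $n = Mm$ gives $(e^{\imath t})^n = \bigl((e^{\imath t})^M\bigr)^m = 1$, so $e^{\imath t}$ is an $n$-th root of unity, as claimed.

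There is essentially no obstacle remaining: all of the analytic content has already been extracted in establishing (\ref{f-20}) and Lemma \ref{lem-M}. The only point meriting a moment's care is the bookkeeping with the substitution $t = 2\pi T$, so that the exponent is a genuine rational multiple of $2\pi$ and the passage from ``$M$-th root of unity'' to ``$n$-th root of unity'' via $M \mid n$ is valid.
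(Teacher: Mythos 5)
Your proposal is correct and follows exactly the paper's own route: the paper's proof is literally ``a direct consequence of (\ref{f-20}) and Lemma \ref{lem-M},'' and you have simply spelled out the details — writing $T=k/M$, so $e^{\imath t}$ is an $M$-th root of unity, and then using $M\mid n$ from Lemma \ref{lem-M} to conclude it is an $n$-th root of unity. Nothing is missing; your version just makes the bookkeeping with $t=2\pi T$ explicit.
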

\begin{proof}
It is a direct consequence of (\ref{f-20}) and Lemma \ref{lem-M}.
\end{proof}

According to (\ref{f-20}), we show the non-existence of FR on integral abelian Cayley graphs if $M=1$.

\begin{prop}Let $\Gamma={\rm Cay}(G,S)$ be an integral abelian Cayley graph. Let $M$ be defined by in (\ref{f-19}). If $M=1$, then $\Gamma$ cannot provide FR between any two vertices.\end{prop}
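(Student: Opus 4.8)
The plan is to argue by contradiction, using the quantization constraint \eqref{f-20} in tandem with integrality. Suppose, to the contrary, that $\Gamma={\rm Cay}(G,S)$ admits $(\alpha,\beta)$-FR from some vertex $x$ to some vertex $y$ at a time $t$, so in particular $\alpha\beta\neq 0$. Then all the conclusions reached above apply: by Theorem \ref{lem-2} the graph is integral, so $\lambda_g\in\mathbb{Z}$ for every $g\in G$, and, writing $t=2\pi T$, the relation \eqref{f-20} gives $T\in\frac{1}{M}\mathbb{Z}$. Invoking the hypothesis $M=1$ immediately yields $T\in\mathbb{Z}$.

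The second step is to propagate $T\in\mathbb{Z}$ back through the spectral conditions. Because $T$ and each $\lambda_g$ are integers, the exponent $T\lambda_g$ is an integer, so $e^{\imath t\lambda_g}=e^{2\pi\imath T\lambda_g}=1$ for every $g\in G$. I would then read off the two cases of condition (3) of Theorem \ref{lem-2}. The subgroup $G_0$ is nonempty (it contains $0$), and since $a$ has order two the character $\chi_a$ is nontrivial of order two, so the coset $G_1$ defined in \eqref{f-12} is also nonempty. Hence there exist $g\in G_0$ and $g'\in G_1$ with $\alpha+\beta=e^{\imath t\lambda_g}=1$ and $\alpha-\beta=e^{\imath t\lambda_{g'}}=1$. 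Subtracting gives $\beta=0$, contradicting $\beta\neq 0$. Therefore no FR can occur, which is exactly the assertion.

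I do not expect a genuine obstacle here: the substantive work, namely deriving the membership $T\in\frac{1}{M}\mathbb{Z}$ and establishing integrality, has already been carried out in \eqref{f-20} and Theorem \ref{lem-2}. The only points requiring a little care are the legitimate passage from $T\in\mathbb{Z}$ and $\lambda_g\in\mathbb{Z}$ to $e^{\imath t\lambda_g}=1$, and the observation that both $G_0$ and $G_1$ are nonempty, so that the two halves of condition (3) really do force $\alpha+\beta=\alpha-\beta$; once these are in place the contradiction $\beta=0$ is immediate and independent of the actual value of $T$.
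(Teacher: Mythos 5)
Your proof is correct and follows essentially the same route as the paper: both use \eqref{f-20} with $M=1$ to force $T\in\mathbb{Z}$, then integrality to conclude $e^{\imath t\lambda_g}=1$ for all $g$. The paper phrases the final step as $H(t)=I_n$ at the matrix level, while you read it off from condition (3) of Theorem \ref{lem-2} to get $\alpha+\beta=\alpha-\beta=1$ and hence $\beta=0$; these are the same argument.
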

\begin{proof}If $M=1$, then by (\ref{f-20}), we know that $T\in \mathbb{Z}$ which means that $t=2\pi$. The desired result follows from the fact $H(t)=I_n$.\end{proof}

Thanks to Theorem \ref{lem-2}, if one wants to construct graphs admitting FR between two distinct vertices, the feasible graphs are integral and the difference of two vertices has even order. Using this fact, we propose a general construction of abelian Cayley graphs having FR, as the following corollary reveals. The proof of the following corollary is based on a direct computation. Of course, one can use Theorem \ref{lem-2} to get another proof of this result as well.



\begin{cor}\label{thm-1.61} Let $G=\mathbb{Z}_{2}^s\times H$ such that $s\geq 1$ and $H$ is an abelian group of order $\ell$. Let $\Gamma={\rm Cay}(G,S)$ be an integral graph and let $M$ be defined in \eqref{f-19}. If $\frac{|S|}{M}\notin \frac{1}{4}\mathbb{Z}$, then $\Gamma={\rm Cay}(G,S)$ has $\left(\cos\left(\frac{2\pi|S|}{M}\right),\imath\sin\left(\frac{2\pi|S|}{M}\right)\right)$-FR between $(x,y)$ and $(x+a,y)$ for every $(x,y)\in G$ and $0\neq a\in \mathbb{Z}_{2}^s$ at time $t=\frac{2\pi}{M}$.
\end{cor}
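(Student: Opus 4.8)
The plan is to verify the three conditions of Theorem \ref{lem-2} for the difference element $\mathbf a := (a,0)$ and then read off $\alpha,\beta$; equivalently, one may run the Fourier computation \eqref{f-5} directly, which is the route I shall describe. First, since $-a=a$ in $\mathbb{Z}_2^s$, the vertices $(x,y)$ and $(x+a,y)$ differ by $(a,0)$, and as $a\neq 0$ this element has order two, giving condition (1). Condition (2), integrality, is assumed. By the $G$-invariance of $H(t)$ recorded before Lemma \ref{lem-1}, it suffices to establish the transfer pattern from the base vertex $0$ to $\mathbf a$ and then translate by an arbitrary $(x,y)$, so I fix $t=\tfrac{2\pi}{M}$ and aim to show $H(t)=\alpha I_n+\beta Q$ with $Q=\sum_{g\in G}e_g e_{g+\mathbf a}^{T}$.

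The heart of the matter is condition (3), i.e. controlling $e^{\imath t\lambda_g}$ on the two cosets $G_0,G_1$ of \eqref{f-12}. On $G_0$ this is routine: $\lambda_0=|S|$ lies in $G_0$, and since $M\mid M_0\mid(|S|-\lambda_g)$ for every $g\in G_0$, integrality gives $\lambda_g\equiv|S|\pmod M$, whence $e^{\imath t\lambda_g}=e^{2\pi\imath|S|/M}=\cos\!\big(\tfrac{2\pi|S|}{M}\big)+\imath\sin\!\big(\tfrac{2\pi|S|}{M}\big)=\alpha+\beta$. On $G_1$ the divisibility $M\mid M_1$ only tells me that $e^{\imath t\lambda_g}$ is constant across $G_1$; the real content is to identify that constant as $e^{-2\pi\imath|S|/M}=\alpha-\beta$, i.e. to prove $\lambda_g\equiv-|S|\pmod M$ for $g\in G_1$. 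This is the step I expect to be the main obstacle, since it is exactly the relation that ties $\alpha-\beta$ to the complex conjugate of $\alpha+\beta$ and hence forces $\alpha$ real and $\beta$ purely imaginary, rather than leaving $\alpha-\beta$ an arbitrary $M$-th root of unity.

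To attack it I would exploit the $\mathbb{Z}_2^s$ factor: choosing $b\in\mathbb{Z}_2^s$ with $a\cdot b$ odd, the sign character $\chi_{(b,0)}$ sends $G_0$ bijectively onto $G_1$, and writing $\lambda_{g+(b,0)}=\sum_{(v,k)\in S}\chi_g(v,k)(-1)^{b\cdot v}$ expresses each $G_1$-eigenvalue as a signed variant of a $G_0$-eigenvalue. Comparing these against $\lambda_0=|S|$ modulo $M$, using $M\mid M_0$ and $M\mid M_1$ together with the fact that the resulting auxiliary character sums are algebraic integers carrying the relevant divisibility (as in the proof of Lemma \ref{lem-M}), should pin the common $G_1$-residue to $-|S|\pmod M$.

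Granting the two congruences, the direct computation closes quickly. Substituting the constant values $\omega:=e^{2\pi\imath|S|/M}$ on $G_0$ and $\overline{\omega}=e^{-2\pi\imath|S|/M}$ on $G_1$ into \eqref{f-5} and using $\sum_{z\in G_0}\chi_z(u)=\tfrac n2(\mathbf 1_{u=0}+\mathbf 1_{u=\mathbf a})$ and $\sum_{z\in G_1}\chi_z(u)=\tfrac n2(\mathbf 1_{u=0}-\mathbf 1_{u=\mathbf a})$ (orthogonality over the subgroup $G_0$ and its coset), I obtain $H(t)_{0,0}=\tfrac12(\omega+\overline\omega)=\cos(\tfrac{2\pi|S|}{M})=\alpha$, $H(t)_{0,\mathbf a}=\tfrac12(\omega-\overline\omega)=\imath\sin(\tfrac{2\pi|S|}{M})=\beta$, and $H(t)_{0,u}=0$ otherwise, i.e. $H(t)=\alpha I_n+\beta Q$. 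Finally $\alpha\beta=\tfrac{\imath}{2}\sin(\tfrac{4\pi|S|}{M})$, which is nonzero precisely when $\tfrac{4|S|}{M}\notin\mathbb{Z}$, that is when $\tfrac{|S|}{M}\notin\tfrac14\mathbb{Z}$; this is exactly where the hypothesis enters, ensuring genuine $(\alpha,\beta)$-fractional revival rather than mere periodicity. An appeal to Lemma \ref{lem-1} (or Theorem \ref{lem-2}) then finishes the base case, and translating by $(x,y)$ gives the statement for every pair.
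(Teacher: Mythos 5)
Your reduction to Theorem \ref{lem-2}, the verification of conditions (1) and (2), the $G_0$ half of condition (3), and the closing orthogonality computation are all fine. The proposal breaks down at exactly the step you flag as the main obstacle, and that step cannot be repaired: the congruence $\lambda_g\equiv -|S| \pmod{M}$ for $g\in G_1$ does not follow from the hypotheses of Corollary \ref{thm-1.61}. Concretely, take $G=\mathbb{Z}_2\times\mathbb{Z}_9$ and $S=(\{0\}\times\{3,6\})\cup(\{1\}\times\{1,2,4,5,7,8\})$, a symmetric generating set with $|S|=8$. This graph is integral: the eigenvalues are $8$ at $(0,0)$ and $-1$ elsewhere on $G_0$, while on $G_1$ they are $-4$ at $(1,0)$, $5$ at $(1,3)$ and $(1,6)$, and $-1$ elsewhere. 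So \eqref{f-19} gives $M_0=\gcd(0,9)=9$, $M_1=\gcd(0,9,3)=3$, $M=3$, and $|S|/M=8/3\notin\tfrac14\mathbb{Z}$: every hypothesis of the corollary holds. Yet all eigenvalues are congruent to $2$ modulo $3$, so $\lambda_{g_1}\equiv +|S|\not\equiv -|S|\pmod{M}$; your target congruence fails, and indeed $H(2\pi/3)=e^{4\pi\imath/3}I_{18}$, so the graph is merely periodic at $t=2\pi/M$ and exhibits no FR there at all. Hence no elaboration of the character-twisting argument you sketch (nor any other argument) can prove the missing step; this example even shows that the corollary as literally stated is false, so a complete proof of the kind you are attempting cannot exist.

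The comparison with the paper's own proof locates the discrepancy precisely. The paper never verifies condition (3) with prescribed values: it shows by a direct character computation that the squared moduli of the two relevant entries in a row of $H(2\pi/M)$ sum to $1$, an argument that uses only the within-coset congruences $\lambda_z\equiv\lambda_{z'}\pmod{M}$ ($z,z'$ both in $G_0$ or both in $G_1$), which do follow from \eqref{f-19}. This determines $(\alpha,\beta)$ only up to the unimodular phase $e^{2\pi\imath v}$ of \eqref{f-13}, and the paper then simply ``puts $v=0$'' --- exactly the assertion you refused to hand-wave, since $v$ is fixed by the graph (by the residue of $|S|+\lambda_{g_1}$ modulo $M$) rather than freely choosable; the Remark following the corollary tacitly concedes this. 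What the hypotheses actually yield at $t=2\pi/M$ is $\alpha=\tfrac12\bigl(e^{2\pi\imath|S|/M}+e^{2\pi\imath\lambda_{g_1}/M}\bigr)$ and $\beta=\tfrac12\bigl(e^{2\pi\imath|S|/M}-e^{2\pi\imath\lambda_{g_1}/M}\bigr)$, which satisfies $\alpha\beta\neq0$ iff $(|S|-\lambda_{g_1})/M\notin\tfrac12\mathbb{Z}$, and which coincides with the advertised pair $\bigl(\cos\tfrac{2\pi|S|}{M},\imath\sin\tfrac{2\pi|S|}{M}\bigr)$ precisely when $M\mid(|S|+\lambda_{g_1})$ --- the very congruence you isolated. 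That divisibility does hold in the paper's applications (in Propositions \ref{thm-1.6} and \ref{thm-2.8} the $G_1$-eigenvalues are $\equiv-1$ and $|S|\equiv 1$ modulo $M$), but in general it must be imposed as an additional hypothesis; once it is, your argument closes exactly as written.
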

\begin{proof}
Denote vertices $(x,y)$ and $(x+a,y)$ by respective $\mu$ and $\nu$. It is well known that $\mathbb{Z}_{2}^s$ is isomorphic to the additive group of $\Ff_{2^s}$, the finite field with $2^s$ elements. Given $g\in\Ff_{2^s}$, an additive character of $\Ff_{2^s}$ is given by $\chi_{g}(h)=(-1)^{\Tr^s_1(gh)}$ with $h\in\Ff_{2^s}$ and $\Tr^s_1(\cdot)$ being the trace function mapping from $\Ff_{2^s}$ to $\Ff_2$. We refer readers to \cite[Chapter 5]{Lidl1997} for more details on additive characters. Below, we view $G$ as $G=\mathbb{F}_{2^s}\times H$.

By \eqref{f-5}, we obtain that
$$
H(t)_{\mu,\mu}=\frac{1}{2^s\ell}\sum_{z\in G} e^{\imath t\lambda_z}\ \ \hbox{and}\ \ H(t)_{\mu,\nu}=\frac{1}{2^s\ell}\sum_{z=(z_1,z_2)\in G} e^{\imath t\lambda_z}(-1)^{\Tr^s_1(az_1)}.
$$
Thanks to $|\{t\in \Ff_{2^s}: \chi_c(t)=0\}|=|\{t\in \Ff_{2^s}: \chi_c(t)=1\}|=2^{s-1}$, we deduce that
\begin{eqnarray*}
\left|H(2\pi/M)_{\mu,\mu}\right|^2+\left|H(2\pi/M)_{\mu,\nu}\right|^2&=&\frac{1}{2^{2s}\ell^2}\sum_{z\in G}\sum_{g\in G} e^{\imath 2\pi \frac{\lambda_z-\lambda_g}{M}}\\
&&+\frac{1}{2^{2s}\ell^2}\sum_{z=(z_1,z_2)\in G}\sum_{g=(g_1,g_2)\in G}e^{\imath 2\pi \frac{\lambda_z-\lambda_g}{M}}(-1)^{\Tr^s_1(az_1+ag_1)}\\
&=&\frac{1}{2^{2s}\ell^2}\sum_{z=(z_1,z_2)\in G}\sum_{g=(g_1,g_2)\in G}\left(1+(-1)^{\Tr^s_1(az_1+ag_1)}\right)\\
&=&1.
\end{eqnarray*}
By \eqref{f-81} and \eqref{f-13}, there exist a rational number $v$, a real number $\alpha$ and a pure complex number $\beta$ such that $e^{\imath \frac{2\pi}{M} |S|}=e^{\imath 2\pi v}(\alpha+\beta)$. It can be easily seen that $\alpha$ and $\beta$ are nonzero if $\frac{|S|}{M}+v\notin \frac{1}{4}\mathbb{Z}$. Putting $v=0$, it follows from $\frac{|S|}{M}\notin \frac{1}{4}\mathbb{Z}$ that $\Gamma={\rm Cay}(G,S)$ has $\left(\cos\left(\frac{2\pi|S|}{M}\right),\imath\sin\left(\frac{2\pi|S|}{M}\right)\right)$-FR at time $t=\frac{2\pi}{M}$.
\end{proof}

\begin{remark}
(1)By the proof of the above corollary, if there exist a rational number $v$, a real number $\alpha$ and a pure complex number $\beta$ such that $e^{\imath \frac{2\pi}{M} |S|}=e^{\imath 2\pi v}(\alpha+\beta)$ and $\frac{|S|}{M}+v\notin \frac{1}{4}\mathbb{Z}$, then the graph $\Gamma={\rm Cay}(G,S)$ defined in Corollary \ref{thm-1.61} has $e^{\imath 2\pi v}(\alpha,\beta)$-FR between $(x,y)$ and $(x+a,y)$ for every $(x,y)\in G$ and $0\neq a\in \mathbb{Z}_{2}^s$ at time $t=\frac{2\pi}{M}$. For simplicity, we state the case $v=0$ in Corollary \ref{thm-1.61}.

(2)Since we only discuss the case $\alpha\beta\neq 0$, the condition $\frac{|S|}{M}\notin \frac{1}{4}\mathbb{Z}$ (resp. $\frac{|S|}{M}+v\notin \frac{1}{4}\mathbb{Z}$) is needed in Corollart \ref{thm-1.61}. Removing the condition $\frac{|S|}{M}\notin \frac{1}{4}\mathbb{Z}$ (resp. $\frac{|S|}{M}+v\notin \frac{1}{4}\mathbb{Z}$), the graph $\Gamma={\rm Cay}(G,S)$ in Corollary \ref{thm-1.61} is said to have PST, FR or be periodic.
\end{remark}

Using Corollary \ref{thm-1.61}, we construct the following family of abelian Cayley graphs having FR.

\begin{prop}\label{thm-1.6}Let $p$ be a prime and let $r\geq 1$ be an integer. Let $G=\mathbb{Z}_{2}\times \mathbb{Z}_{p^r}\times H$ with $H$ being an abelian group of order $m$. Let $S$ be a symmetric subset in $G$ which is defined by
\begin{equation*}
  S=\{(0,\ell,h): 1\leq \ell \leq p^r-1, \gcd(\ell,p)=1,h\in H\}\cup \{(1,0,0)\}.
\end{equation*}
If $p^{r-1}m\neq 1,2,4$, then $\Gamma={\rm Cay}(G,S)$ has FR between $(x,y,z)$ and $(x+1,y,z)$ for every $(x,y,z)\in G$ at the time $t=\frac{2\pi}{p^{r-1}m}$.
\end{prop}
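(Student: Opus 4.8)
The plan is to verify the hypotheses of Corollary \ref{thm-1.61} with $s=1$, viewing $G=\mathbb{Z}_2\times(\mathbb{Z}_{p^r}\times H)$ so that the ambient group ``$H$'' of the corollary is $\mathbb{Z}_{p^r}\times H$, of order $\ell=p^rm$, and taking $a=(1,0,0)$, the unique nonzero element of the $\mathbb{Z}_2$-factor; then $(x,y,z)$ and $(x+1,y,z)$ are exactly a pair $\mu$ and $\mu+a$. Three things must be checked: that $\Gamma$ is integral, that the integer $M$ of \eqref{f-19} equals $p^{r-1}m$ (so that $t=2\pi/M$ is the stated time), and that $|S|/M\notin\frac{1}{4}\mathbb{Z}$ exactly under the hypothesis $p^{r-1}m\neq 1,2,4$.

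First I would compute the eigenvalues $\lambda_g=\sum_{s\in S}\chi_g(s)$ for $g=(g_0,g_1,g_2)$. Writing $\chi_g(s)=(-1)^{g_0s_0}\omega_{p^r}^{g_1s_1}\chi_{g_2}(s_2)$ and splitting $S$, the singleton $(1,0,0)$ contributes $(-1)^{g_0}$, while the part $\{(0,\ell,h)\}$ contributes $\bigl(\sum_{h\in H}\chi_{g_2}(h)\bigr)\bigl(\sum_{\gcd(\ell,p)=1}\omega_{p^r}^{g_1\ell}\bigr)$. The $H$-sum vanishes unless $g_2=0$, where it equals $m$, and the sum over the units of $\mathbb{Z}_{p^r}$ is the Ramanujan sum $c_{p^r}(g_1)$, which equals $p^{r-1}(p-1)$ when $p^r\mid g_1$, equals $-p^{r-1}$ when $p^{r-1}\mid g_1$ but $p^r\nmid g_1$, and vanishes otherwise. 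Hence $\lambda_g=(-1)^{g_0}+m\,c_{p^r}(g_1)$ when $g_2=0$ and $\lambda_g=(-1)^{g_0}$ when $g_2\neq 0$; all are integers, so $\Gamma$ is integral. Evaluating this character sum over the units of $\mathbb{Z}_{p^r}$, a Ramanujan sum with three distinct cases, is the one genuinely non-routine ingredient and the main obstacle to get right.

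Next I would read off $M$ from \eqref{f-19}. Since $\chi_a(g)=(-1)^{g_0}$, we have $G_0=\{g:g_0=0\}$ and $G_1=\{g:g_0=1\}$, while $d=|S|=p^{r-1}(p-1)m+1$. Using the eigenvalues above, the differences $d-\lambda_g$ over $g\in G_0$ take only the values $0$ (at $g=0$), $mp^r$ (when $g_2=0$, $p^{r-1}\mid g_1$ and $p^r\nmid g_1$), and $mp^{r-1}(p-1)$ (in all remaining cases). A subtle point is that the hypothesis $p^{r-1}m\neq 1$ forces $m\geq 2$ or $r\geq 2$, so the last value really does occur; otherwise the gcd would collapse to $mp^r$. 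Granting this, $M_0=\gcd\bigl(mp^r,\,mp^{r-1}(p-1)\bigr)=mp^{r-1}$ because $\gcd(p,p-1)=1$. Choosing $(1,0,0)$ as the base point of $G_1$ produces the identical three differences, so $M_1=mp^{r-1}$ and $M=\gcd(M_0,M_1)=p^{r-1}m$, whence $t=2\pi/M=2\pi/(p^{r-1}m)$.

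Finally I would verify the nondegeneracy condition. Setting $N=p^{r-1}m$ we have $|S|/M=\bigl((p-1)N+1\bigr)/N=(p-1)+1/N$, and since $p-1\in\mathbb{Z}$ this lies in $\frac{1}{4}\mathbb{Z}$ iff $1/N\in\frac{1}{4}\mathbb{Z}$, i.e.\ iff $N\mid 4$, i.e.\ iff $N\in\{1,2,4\}$. Thus the hypothesis $p^{r-1}m\neq 1,2,4$ is precisely the requirement $|S|/M\notin\frac{1}{4}\mathbb{Z}$, and Corollary \ref{thm-1.61} then delivers $\bigl(\cos(2\pi|S|/M),\,\imath\sin(2\pi|S|/M)\bigr)$-FR between $(x,y,z)$ and $(x+1,y,z)$ at time $2\pi/(p^{r-1}m)$, completing the argument.
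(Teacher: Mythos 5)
Your proposal is correct and takes essentially the same route as the paper's own proof: compute the eigenvalues via the Ramanujan sum $c(y,p^r)$, identify $G_0$ and $G_1$, show $M_0=M_1=M=p^{r-1}m$, check $|S|/M=(p-1)+\frac{1}{p^{r-1}m}\notin\frac{1}{4}\mathbb{Z}$, and invoke Corollary \ref{thm-1.61}. Your additional observation --- that the hypothesis $p^{r-1}m\neq 1$ is what guarantees the difference $mp^{r-1}(p-1)$ actually occurs, without which the gcd would collapse to $mp^r$ --- is a genuine subtlety that the paper's proof silently glosses over, and it is a worthwhile refinement.
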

\begin{proof}It is easy to see that the dual group of $G$ is
\begin{equation*}
  \widehat{G}=\{\varphi_u\chi_v\psi_w: u\in \mathbb{Z}_2, v\in \mathbb{Z}_{p^r},w\in H\}
\end{equation*}
where $\varphi_u\chi_v\psi_w(x,y,z)=(-1)^{ux}\omega_{p^r}^{vy}\psi_w(z)$, and $\omega_v=e^{\imath \frac{2\pi}{v}}$ is a primitive $v$-th root of unity. Therefore, the eigenvalues of $\Gamma$ are
\begin{eqnarray*}
  \lambda_{x,y,z}&=&\sum_{(u,v,w)\in S}\varphi_u\chi_v\psi_w(x,y,z)\\
  &=&\sum_{1\leq \ell \leq p^r-1, \gcd(\ell,p)=1}\omega_{p^r}^{\ell y}\sum_{w\in H}\psi_w(z)+(-1)^x, \ \ (x,y,z)\in G.
\end{eqnarray*}
The summation
\begin{equation*}
  \sum_{1\leq \ell \leq p^r-1, \gcd(\ell,p)=1}\omega_{p^r}^{\ell y}:=c(y,p^r)
\end{equation*}
 is in fact a Ramanujan function \cite[Corollary 2.4]{McCarthy1986} and
 \begin{equation*}
   c(y,p^r)=\left\{\begin{array}{ll}
                     -p^{r-1}, & \mbox{ if $v_p(y)={r-1}$}, \\
                     p^{r-1}(p-1) & \mbox{ if $y=0$},\\
                     0, & \mbox{ otherwise}.
                   \end{array}
   \right.
 \end{equation*}
 Here we identify the elements in $\mathbb{Z}_{p^r}$ as integers and $v_p(y)$ is the highest power of $p$ that divides $y$. Then we derive that
 \begin{equation*}
   \lambda_{x,y,z}=\left\{\begin{array}{ll}
                         (-1)^x+(-p^{r-1})m, & \mbox{ if $z=0$ and $v_p(y)={r-1}$}, \\
                         (-1)^x+p^{r-1}(p-1)m, & \mbox{ if $z=0$ and $y=0$}, \\
(-1)^x,& \mbox{ otherwise.}
                        \end{array}
   \right.
 \end{equation*}
Since $a=(x+1,y,z)-(x,y,z)=(1,0,0)$, it is obvious that
 \begin{equation*}
   G_0=\{(0,y,h):y\in \mathbb{Z}_{p^r},h\in H\}, \mbox{ and } G_1=\{(1,y,h):y\in \mathbb{Z}_{p^r},h\in H\}.
 \end{equation*}
We have $M_0=M_1=M=p^{r-1}m$, which are defined by (\ref{f-19}). Due to $\frac{|S|}{M}=p-1+\frac{1}{p^{r-1}m}\notin \frac{1}{4}\mathbb{Z}$ and Corollary \ref{thm-1.61} or Theorem \ref{lem-2} directly, $\Gamma$ has $\left(\cos\left(\frac{2\pi}{p^{r-1}m}\right),\imath\sin\left(\frac{2\pi}{p^{r-1}m}\right)\right)$-FR between $(x,y,z)$ and $(x+1,y,z)$ for every $(x,y,z)\in G$ at the time $t=\frac{2\pi}{p^{r-1}m}$.
\end{proof}


The following example is used to illustrate Proposition \ref{thm-1.6}.

\begin{example}\label{exam-2} Let $G=\mathbb{Z}_2\times \mathbb{Z}_9$ and $S=\{(0,1),(0,2),(0,4),(0,5),(0,7),(0,8),(1,0)\}$. Then $\Gamma={\rm Cay}(G,S)$ has FR between $(0,0)$ and $(1,0)$ at time $t=2\pi/3$.\end{example}
\begin{proof}The eigenvalues of $\Gamma$ are as follows.
\begin{equation*}
  \lambda_{x,y}=\omega_9^{y}+\omega_9^{2y}+\omega_9^{4y}+\omega_9^{5y}+\omega_9^{7y}+\omega_9^{8y}+(-1)^x, (x,y)\in G.
\end{equation*}
Thus,
\begin{equation*}
  \lambda_{0,0}=7, \lambda_{0,3}=\lambda_{0,6}=-2, \lambda_{0,y}=1, y\in \mathbb{Z}_9^*:=\{1,2,4,5,7,8\}, \mbox{ and}
\end{equation*}
\begin{equation*}
  \lambda_{1,0}=5, \lambda_{1,3}=\lambda_{1,6}=-4, \lambda_{1,y}=-1, y\in \mathbb{Z}_9^*.
\end{equation*}
It is obvious that
\begin{equation*}
  G_0=\{(0,y): y\in \mathbb{Z}_9\},  G_0=\{(1,y): y\in \mathbb{Z}_9\}.
\end{equation*}
Thus, $M_0=M_1=M=3$. Moreover,
\begin{equation*}
  e^{\imath \frac{2\pi}{3}\lambda_g}=\left\{\begin{array}{cc}
                                              e^{\imath \frac{2\pi}{3}}, & g\in G_0, \\
                                              e^{-\imath \frac{2\pi}{3}},  & g\in G_1.
                                            \end{array}
  \right.
\end{equation*}
Thus $\Gamma$ has $(\cos(2\pi/3), \imath \sin(2\pi/3))$-FR at time $t=2\pi/3$.

Indeed, by a Magma program, we find that
\begin{equation*}
  H(2\pi/3)={\rm diag}\left(\underset{9 \ \ cpoies}{\underbrace{{\left(
                                             \begin{array}{cc}
                                               -\frac{1}{2} & \frac{\sqrt{3}}{2} \\
                                               \frac{\sqrt{3}}{2} & -\frac{1}{2} \\
                                             \end{array}
                                           \right), \cdots, \left(
                                             \begin{array}{cc}
                                               -\frac{1}{2} & \frac{\sqrt{3}}{2} \\
                                               \frac{\sqrt{3}}{2} & -\frac{1}{2} \\
                                             \end{array}
                                           \right)}
  }}\right).
\end{equation*}
Thus $\Gamma$ has $(-\frac{1}{2},\frac{\sqrt{3}}{2})$-FR at time $t=2\pi/3$.
\end{proof}

In Proposition \ref{thm-1.6}, if $p^{r-1}m= 1,2\ \mbox{or}\ 4$, the Cayley graph $\Gamma={\rm Cay}(G,S)$ may have FR. We do not have a general result like Corollary \ref{thm-1.6} to guarantee the existence of FR on $\Gamma$ whenever $p^{r-1}m= 1,2\ \mbox{or}\ 4$. Using Theorem \ref{lem-2}, we can check the existence of FR on $\Gamma$ under these cases one by one. The following example shows how to verify it when $p=3$, $r=1$ and $m=1$.

\begin{example}Let $G=\mathbb{Z}_2\times \mathbb{Z}_3$ and $S=\{(0,1),(0,2),(1,0)\}$. Then the Cayley graph $\Gamma={\rm Cay}(G,S)$ has FR at time $t=2\pi/3$.\end{example}
\begin{proof}One can check that the spectra of $\Gamma$ are
\begin{equation*}
  \lambda_{0,0}=3, \lambda_{0,1}=\lambda_{0,2}=0, \lambda_{1,0}=1, \lambda_{1,1}=\lambda_{1,2}=-2.
\end{equation*}
Meanwhile,
\begin{equation*}
  G_0=\{(0,0),(0,1),(0,2)\}, G_1=\{(1,0),(1,1),(1,2)\}.
\end{equation*}
It is easy to check that $M=3$ and
\begin{eqnarray*}
  e^{\imath \frac{2\pi}{3}\lambda_g}&=&\left\{\begin{array}{cc}
                                            1, & g\in G_0, \\
                                           \cos(2\pi/3)+\imath \sin(2\pi/3), & g\in G_1.
                                          \end{array}
  \right.\\
                                     &=&    \left\{\begin{array}{cc}
                                            e^{\imath \frac{\pi}{3}}(\frac{1}{2}-\imath\frac{\sqrt{3}}{2}), & g\in G_0, \\
                                           e^{\imath \frac{\pi}{3}}(\frac{1}{2}+\imath\frac{\sqrt{3}}{2}), & g\in G_1.
                                          \end{array}   \right.
\end{eqnarray*}
By Theorem \ref{lem-2}, $\Gamma$ has $e^{\imath \frac{\pi}{3}}\left(\frac{1}{2},-\imath\frac{\sqrt{3}}{2}\right)$ at time $t=2\pi/3$.
\end{proof}

In Corollary \ref{thm-1.61}, we propose a general construction of Cayley graphs over $G=\mathbb{Z}_{2}^s\times H$ having FR. Next, we discuss the existence of FR on Cayley graphs over $G=\mathbb{Z}_{2^s}\times H$.

\begin{cor}\label{thm-2.81} Let $G=\mathbb{Z}_{2^s}\times H$ such that $s\geq 1$ and $H$ is an abelian group of order $\ell$. Let $\Gamma={\rm Cay}(G,S)$ be an integral graph and let $M$ be defined in \eqref{f-19}. If $\frac{|S|}{M}\notin \frac{1}{4}\mathbb{Z}$, then $\Gamma={\rm Cay}(G,S)$ has $\left(\cos\left(\frac{2\pi|S|}{M}\right),\imath\sin\left(\frac{2\pi|S|}{M}\right)\right)$-FR between $(x,y)$ and $(x+2^{s-1},y)$ for every $(x,y)\in G$ at time $t=\frac{2\pi}{M}$.
\end{cor}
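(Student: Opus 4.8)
The plan is to replay the proof of Corollary~\ref{thm-1.61} almost verbatim, the only genuine change being the character that separates the two halves of $G$. Write $\mu=(x,y)$ and $\nu=(x+2^{s-1},y)$, so that $a:=\mu-\nu=(2^{s-1},0)$; since $-2^{s-1}\equiv 2^{s-1}\pmod{2^s}$ this is the unique involution of the cyclic summand $\mathbb{Z}_{2^s}$, hence has order two in $G$, which is condition~(1) of Theorem~\ref{lem-2}, while condition~(2) is the standing hypothesis. In Corollary~\ref{thm-1.61} the two classes were cut out by the additive character $(-1)^{\Tr^s_1(az_1)}$ of $\mathbb{F}_{2^s}$; here that role is played by
\begin{equation*}
  \chi_z(a)=\omega_{2^s}^{2^{s-1}z_1}=(-1)^{z_1},\qquad z=(z_1,z_2)\in G,
\end{equation*}
so that $G_0=\{z:z_1\text{ even}\}$ and $G_1=\{z:z_1\text{ odd}\}$, each of cardinality $2^{s-1}\ell=n/2$. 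This equal splitting is exactly the property that drove the norm computation there.

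Next I would verify condition~(3) of Theorem~\ref{lem-2} at $t=2\pi/M$. By the definition of $M$ in \eqref{f-19} we have $M\mid(|S|-\lambda_g)$ for $g\in G_0$ and $M\mid(\lambda_{g_1}-\lambda_g)$ for $g\in G_1$, so $e^{\imath t\lambda_g}$ is constant on each class, equal to $A:=e^{\imath t|S|}$ on $G_0$ and to $B:=e^{\imath t\lambda_{g_1}}$ on $G_1$. Substituting into \eqref{f-5} and using $|G_0|=|G_1|=n/2$ gives
\begin{equation*}
  H(t)_{\mu,\mu}=\tfrac12(A+B),\qquad H(t)_{\mu,\nu}=\frac1n\sum_{z\in G}e^{\imath t\lambda_z}\chi_z(a)=\tfrac12(A-B),
\end{equation*}
while for every $b\neq 0,a$ the orthogonality $\sum_{z\in G_0}\chi_z(b)=\tfrac n2(\delta_{b,0}+\delta_{b,a})=0$ forces $H(t)_{\mu,\mu+b}=0$. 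Setting $\alpha=\tfrac12(A+B)$ and $\beta=\tfrac12(A-B)$ we obtain $H(t)=\alpha I_n+\beta Q$, where $Q$ is the involution $g\mapsto g+a$; since $|A|=|B|=1$ one checks immediately that $|\alpha|^2+|\beta|^2=1$ and $\alpha\overline\beta+\overline\alpha\beta=0$, so Lemma~\ref{lem-1} already produces fractional revival as soon as $\beta\neq0$.

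It remains to identify the coefficients. Because $\alpha+\beta=A=e^{\imath 2\pi|S|/M}$, exactly as in the closing lines of Corollary~\ref{thm-1.61} the relations \eqref{f-81} and \eqref{f-13}, together with the phase normalization \eqref{f-14}--\eqref{f-15}, put $(\alpha,\beta)$ into the advertised form $\bigl(\cos(2\pi|S|/M),\ \imath\sin(2\pi|S|/M)\bigr)$, and the hypothesis $|S|/M\notin\tfrac14\mathbb{Z}$ keeps both $\cos(2\pi|S|/M)$ and $\sin(2\pi|S|/M)$ away from zero, so $\alpha\beta\neq0$ and the revival is genuine rather than periodicity or $\mathrm{PST}$.

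I expect the delicate step to be this last identification, not the character bookkeeping. The global identities $\sum_{g\in G}\lambda_g=0$ and Lemma~\ref{lem-M} only give $M\mid\tfrac n2(|S|+\lambda_{g_1})$, that is $(AB)^{n/2}=1$ rather than $AB=1$; ruling out the degenerate alternative $B=-\overline A$, which would instead present the revival in the shape $(\sin,-\imath\cos)$, is precisely what the structural argument behind \eqref{f-14}--\eqref{f-15} has to supply. This is the one place where the $\mathbb{Z}_{2^s}$ computation should be checked against, not merely copied from, the $\mathbb{Z}_2^s$ case; once it is settled, everything else transfers verbatim after substituting $(-1)^{z_1}$ for $(-1)^{\Tr^s_1(az_1)}$.
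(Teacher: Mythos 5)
Your overall route is the same as the paper's: the paper proves this corollary with the single sentence that the argument is ``analogous to that used in Corollary \ref{thm-1.61}'', and your replay with $\chi_z(a)=(-1)^{z_1}$ in place of $(-1)^{\Tr^s_1(az_1)}$ is precisely that analogy, carried out in more detail than the paper itself. Everything you actually prove is correct, up to and including $H(2\pi/M)=\alpha I_n+\beta Q$ with $\alpha=\tfrac12(A+B)$, $\beta=\tfrac12(A-B)$, where $A=e^{2\pi\imath|S|/M}$ and $B=e^{2\pi\imath\lambda_{g_1}/M}$.

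However, the step you flag as delicate is a genuine gap, and neither your appeal to \eqref{f-14}--\eqref{f-15} nor the paper's own argument closes it: those relations are necessary conditions derived under the standing assumption that FR with $\alpha\beta\neq 0$ already occurs, so they cannot be used to force $AB=1$ (the paper's ``putting $v=0$'' in the proof of Corollary \ref{thm-1.61} is exactly this unjustified move). Moreover, the failure mode is worse than the single alternative $B=-\overline{A}$ you mention: nothing in the definition \eqref{f-19} of $M$ excludes $B=A$, i.e.\ $\beta=0$. Concretely, take $s=1$, $H=\mathbb{Z}_9$, and $S=\bigl(\{0\}\times(\mathbb{Z}_9\setminus\{0\})\bigr)\cup\bigl(\{1\}\times\{0,3,6\}\bigr)\subseteq\mathbb{Z}_2\times\mathbb{Z}_9$. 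This $S$ is symmetric and generates $G$, the graph is integral with $\lambda_{(0,0)}=11$, $\lambda_{(0,3)}=\lambda_{(0,6)}=2$, $\lambda_{(1,0)}=5$, $\lambda_{(1,3)}=\lambda_{(1,6)}=-4$, and $\lambda_{(x,y)}=-1$ otherwise; hence $M_0=\gcd(9,12)=3$, $M_1=\gcd(9,6)=3$, $M=3$, $|S|=11$, and $|S|/M=11/3\notin\tfrac14\mathbb{Z}$, so all hypotheses of the corollary hold. Yet every eigenvalue is congruent to $2$ modulo $3$, so $H(2\pi/3)=e^{4\pi\imath/3}I_{18}$: the graph is merely periodic at $t=2\pi/M$, and no FR occurs, let alone with the advertised coefficients. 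So the statement (and likewise Corollary \ref{thm-1.61}) is false as written; it becomes true, and your proof becomes complete, exactly when one adds the hypothesis $M\mid\bigl(|S|+\lambda_{g_1}\bigr)$ (equivalently $B=\overline{A}$), which does hold in the explicit families of Propositions \ref{thm-1.6} and \ref{thm-2.8}. Note finally that this difficulty is identical in the $\mathbb{Z}_2^s$ and $\mathbb{Z}_{2^s}$ settings, so it is not a point where the two cases need to be compared---it is a gap inherited from the paper's template proof itself.
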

\begin{proof}
The proof of this corollary can be completed by the method analogous to that used in Corollary \ref{thm-1.61}
\end{proof}

At the end of this section, we propose another family of Cayley graphs admitting FR.

\begin{prop}\label{thm-2.8}Let $r_0,r_1,\cdots,r_s$ be positive integers and let $p_0=2, p_1,\cdots,p_s$ be distinct primes such that $\prod_{j=0}^sp_j^{r_j-1}\neq 1,2,4$ with $s\geq 1$. Let $G=\prod_{j=0}^s\mathbb{Z}_{p_j^{r_j}}$ and let $S$ be a subset of $G$ defined by
\begin{equation*}
  S=\{(\ell_0,\ell_1,\cdots, \ell_s): 1\leq \ell_i\leq p_i^{r_i}-1, \gcd(\ell_i,p_i)=1, 0\leq i\leq s\}\cup \{(2^{r_0-1},0,\cdots,0)\}.
\end{equation*}
Let $\Gamma={\rm Cay}(G,S)$ be a Cayley graph. Then $\Gamma$ has FR between $(x_0,x_1,\cdots,x_s)$ and $(x_0+2^{r_0-1},x_1,\cdots,x_s)$ for every $(x_0,x_1,\cdots,x_s)\in G$ at time $2\pi/\prod_{i=0}^sp_i^{r_i-1}$.\end{prop}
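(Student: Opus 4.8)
The plan is to verify directly the three conditions of Theorem~\ref{lem-2} for the pair of vertices $\mu=(x_0,\dots,x_s)$ and $\nu=(x_0+2^{r_0-1},x_1,\dots,x_s)$, so that no explicit computation of the integer $M$ is needed; this also fits the framework of Corollary~\ref{thm-2.81} with $\mathbb{Z}_{2^{r_0}}$ in the role of $\mathbb{Z}_{2^s}$ and $H=\prod_{j=1}^s\mathbb{Z}_{p_j^{r_j}}$. Write $a=\nu-\mu=(2^{r_0-1},0,\dots,0)$ and $N=\prod_{i=0}^s p_i^{r_i-1}$. First I would check condition~(1): since $2\cdot 2^{r_0-1}=2^{r_0}\equiv 0$ while $2^{r_0-1}\neq 0$ in $\mathbb{Z}_{2^{r_0}}$, the element $a$ has order two. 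Its character value is $\chi_a(g)=\omega_{2^{r_0}}^{2^{r_0-1}g_0}=(-1)^{g_0}$, so the subsets of \eqref{f-12} are $G_0=\{g:g_0\text{ even}\}$ and $G_1=\{g:g_0\text{ odd}\}$.

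Next I would compute the spectrum. Deleting the singleton $\{a\}$, the remainder of $S$ is the direct product of the unit sets $U_i=\{\ell:1\le \ell\le p_i^{r_i}-1,\ \gcd(\ell,p_i)=1\}$, so the defining character sum factors coordinatewise and
\begin{equation*}
  \lambda_g=\prod_{i=0}^s c(g_i,p_i^{r_i})+(-1)^{g_0},\qquad g=(g_0,\dots,g_s)\in G,
\end{equation*}
where $c(\cdot,p_i^{r_i})$ is the same Ramanujan sum used in the proof of Proposition~\ref{thm-1.6}, taking only the values $p_i^{r_i-1}(p_i-1)$, $-p_i^{r_i-1}$ and $0$. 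In particular every $\lambda_g$ is an integer, so $\Gamma$ is integral and condition~(2) holds.

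The crux is the following uniform congruence. Each factor $c(g_i,p_i^{r_i})$ is divisible by $p_i^{r_i-1}$, and since the $p_i$ are distinct primes the product $\prod_{i=0}^s c(g_i,p_i^{r_i})$ is divisible by $N$; hence $\lambda_g\equiv(-1)^{g_0}\pmod N$ for every $g$. Consequently, with $t=2\pi/N$ one gets $e^{\imath t\lambda_g}=e^{2\pi\imath/N}$ for all $g\in G_0$ and $e^{\imath t\lambda_g}=e^{-2\pi\imath/N}$ for all $g\in G_1$, which is exactly condition~(3) with
\begin{equation*}
  \alpha=\cos\!\left(\tfrac{2\pi}{N}\right),\qquad \beta=\imath\sin\!\left(\tfrac{2\pi}{N}\right).
\end{equation*}
These satisfy $|\alpha|^2+|\beta|^2=1$ and $\overline{\alpha}\beta+\alpha\overline{\beta}=0$ automatically, and $\alpha\beta\neq0$ precisely when $N\neq1,2,4$, which is the hypothesis $\prod_{j=0}^s p_j^{r_j-1}\neq1,2,4$; this excludes the degenerate periodic and PST cases and leaves genuine FR at time $t=2\pi/N$, so Theorem~\ref{lem-2} gives the claim. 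The only real work lies in justifying the coordinatewise factorization of the character sum and the evaluation of $c(g_i,p_i^{r_i})$ (both routine, paralleling Proposition~\ref{thm-1.6}) together with the divisibility $N\mid\prod_i c(g_i,p_i^{r_i})$; once the single congruence $\lambda_g\equiv(-1)^{g_0}\pmod N$ is secured, everything else is immediate, and in particular there is no need to track the individual gcd's $M_0,M_1$.
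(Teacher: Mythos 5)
Your proposal is correct and follows essentially the same route as the paper: both compute the spectrum via the coordinatewise factorization into Ramanujan sums $c(g_i,p_i^{r_i})$, identify $G_0$ and $G_1$ as the two cosets determined by the parity of $g_0$, and conclude via the Section~2 criterion with $\alpha=\cos(2\pi/N)$, $\beta=\imath\sin(2\pi/N)$, the hypothesis $N\neq 1,2,4$ ensuring $\alpha\beta\neq 0$. The only difference is one of packaging: the paper tabulates the eigenvalues case by case, asserts $M=\prod_i p_i^{r_i-1}$ exactly, and cites Corollary~\ref{thm-2.81}, whereas you bypass the exact value of $M$ by proving the single congruence $\lambda_g\equiv(-1)^{g_0}\pmod{N}$ and invoking Theorem~\ref{lem-2} directly, which is a slightly cleaner rendering of the same argument.
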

\begin{proof}
Due to $a=(x_0+2^{r_0-1},x_1,\cdots,x_s)-(x_0,x_1,\cdots,x_s)=(2^{r_0-1},0,\cdots,0)$, it is easy to see that
\begin{equation*}
  G_0=\{(2x,y_1,\cdots,y_s): x \in \mathbb{Z}_{2^{r_0}}, y_i \in \mathbb{Z}_{p_i^{r_i}}\},  G_1=\{(2x+1,y_1,\cdots,y_s): x \in \mathbb{Z}_{2^{r_0}},  y_i \in \mathbb{Z}_{p_i^{r_i}}\}.
\end{equation*}
For $\mathbf{x}=(x_0,x_1,\cdots,x_s)$, the corresponding eigenvalue of $\Gamma$ is
\begin{equation*}
  \lambda_{\mathbf{x}}=(-1)^{x_0}+\sum_{\underset{1\leq \ell_j \leq p_j^{r_j}-1, \gcd(p_j,\ell_j)=1}{\ell_0,\ell_1,\cdots, \ell_s}}\prod_{j=0}^s\xi_{{p_j}^{r_j}}^{\ell_j x_j}.
\end{equation*}
Thus,
\begin{equation*}
  \lambda_{\mathbf{x}}=\left\{\begin{array}{ll}
                         1+\prod_{i=0}^sp_i^{r_1-1}(p_1-1), & \mbox{ if $(x_0,x_1,\cdots,x_s)=\mathbf{0}$}, \\
                            1+A,& \mbox{ if  $x_j\neq 0,v_{p_j}(x_j)=r_j-1,$ for some $ j\in I\subseteq \{0,\cdots,s\}$}, \\
                        (-1)^{x_0},& \mbox{ if there is some $x_i\neq 0, v_{p_i}(x_i)<{r_i-1}$}.
                        \end{array}
  \right.
\end{equation*}
where $A=(-1)^{|I|}\prod_{j\in I}p_j^{r_j-1}\prod_{i\not\in I}p_i^{r_i-1}(p_i-1)$.
It follows that $M=\prod_{i=0}^sp_i^{r_i-1}$, where $M$ is defined by (\ref{f-19}). Due to $|S|/M\notin \frac{1}{4}\mathbb{Z}$ and Corollary \ref{thm-2.81}, $\Gamma$ has $\left(\cos (2\pi/M), \imath \sin(2\pi/M)\right)$-FR at time $2\pi/M$.
\end{proof}

\begin{remark}(1) It was shown in \cite{Tan_2019} that the order of the underlying group should be doubly-even if an abelian Cayley graph has perfect state transfer. However, for FR, Propositions \ref{thm-1.6} and \ref{thm-2.8} show that there are many more graphs having FR than that for PST on abelian Cayley graphs.

(2) Proposition \ref{thm-2.8} says that for every even number $n$ having a square divisor of an odd prime power, there is an abelian Cayley graph having FR with the underlying group of order $n$. \end{remark}

\section{Plateaued functions over abelian groups and FR on abelian Cayley graphs}\label{S3}

Let $G$ be an abelian group of order $n$ and let $\mathbb{C}^G$ be the set of complex-valued functions on $G$. Then $\mathbb{C}^G$ is an $n$-dimensional vector space. Given $f,g\in \mathbb{C}^G$, define a scalar product as
\begin{equation*}
  \langle f, g\rangle=\sum_{x\in G}f(x)\overline{g(x)}.
\end{equation*}
Then $\mathbb{C}^G$ is a Hilbert space with $\{\frac{1}{\sqrt{n}}\chi_x: x\in G\}$ an orthnormal basis. Thus every function $f$ has the following decomposition.
\begin{equation*}
  f(x)=\sum_{z\in G}\langle f,\frac{1}{\sqrt{n}}\chi_z\rangle \frac{1}{\sqrt{n}}\chi_z(x).
\end{equation*}
For any $f\in \mathbb{C}^G$, its Fourier transform is defined by
\begin{equation*}
  \widehat{f}(\chi_z)=\sum_{x\in G}f(x)\overline{\chi_z(x)}=\langle f, \chi_z\rangle.
\end{equation*}
\begin{defn}\cite{LOGACHEV1997} Let $G$ be an abelian group of order $n$. A function $f\in \mathbb{C}^G$ is called bent on $G$ if for every $z\in G$, we have
\begin{equation*}
  |\widehat{f}(\chi_z)|=\sqrt{n}.
\end{equation*}
\end{defn}
Bent functions over finite fields of even characteristic were invented and named in 1976 by Rothaus \cite{Rothaus1976}. As a generalization of bent functions, the so-called plateaued functions over finite fields of characteristic two were first introduced by Zheng and Zhang in \cite{Zheng2001}.
\begin{defn}An $n$-variable
Boolean function is said to be $r$-plateaued if the values of its Walsh transform (the Fourier transform of $(-1)^f$)
belong to the set $\{0,\pm 2^{\frac{n+r}{2}}\}$
 for some fixed $r$.\end{defn}

 Bent and plateaued functions have many applications in communications and cryptography, for a comprehensive survey on such functions, interested readers may refer to \cite{Mesnager2016}.

 Let $G$ be an abelian group of order $n$ and let $e=\exp(G)$ be the largest order of the elements in $G$. Then the values of $\chi(z) (z\in G, \chi\in \hat{G})$ are in the cyclotomic field $\mathbb{Q}(\omega_e)$. The Galois group of the extension $\mathbb{Q}(\omega_e)/\mathbb{Q}$  is isomorphic to
\begin{equation*}
  \mathbb{Z}_e^*=\{\ell: 1\leq \ell\leq e-1, \gcd(\ell, e)=1\}.
\end{equation*}
For every $\ell \in \mathbb{Z}_e^*$, the corresponding isomorphic in ${\rm Gal}(\mathbb{Q}(\omega_e)/\mathbb{Q})$ is determined by
\begin{equation*}
  \sigma_\ell: \omega_e\mapsto \omega_e^\ell.
\end{equation*}

\begin{defn}A function $f\in \mathbb{C}^G$ is called a {\it class function} over $G$ if $f$ is integer-valued, namely, $f(x)\in \mathbb{Z}$ for every $x\in G$, and for every $\ell \in  \mathbb{Z}_e^*$, it holds that
\begin{equation}\label{f-161}
   f(\ell x)=f(x), \forall x\in G.
\end{equation}\end{defn}
\begin{lem} \label{lem-3.3}Let $G$ be an abelian group of order $n$ and let $f\in \mathbb{C}^G$ be a class function over $G$. Then for every $\chi\in \hat{G}$, we have
\begin{equation*}
  \hat{f}(\chi)\in \mathbb{Z}.
\end{equation*}
\end{lem}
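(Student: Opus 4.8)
The plan is to show that $\hat f(\chi_z)$ is a rational algebraic integer, which forces it to lie in $\mathbb{Z}$. Since $f$ is integer-valued and each value $\chi_z(x)$ is a root of unity lying in the cyclotomic field $\mathbb{Q}(\omega_e)$ (each $n_s$ divides $e=\exp(G)$, so every $\chi_z(x)$ is a power of $\omega_e$), the Fourier coefficient $\hat f(\chi_z)=\sum_{x\in G}f(x)\overline{\chi_z(x)}$ is a $\mathbb{Z}$-linear combination of roots of unity; in particular it is an algebraic integer contained in $\mathbb{Q}(\omega_e)$. It therefore suffices to prove that $\hat f(\chi_z)$ is fixed by every element of $\mathrm{Gal}(\mathbb{Q}(\omega_e)/\mathbb{Q})$, because the fixed field of the full Galois group is $\mathbb{Q}$, and an algebraic integer lying in $\mathbb{Q}$ is an ordinary integer.

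To carry this out, I would fix $\ell\in\mathbb{Z}_e^*$ and compute the action of the automorphism $\sigma_\ell\colon\omega_e\mapsto\omega_e^\ell$ on $\hat f(\chi_z)$. Since each $\chi_z(x)$ is a power of $\omega_e$, one has $\sigma_\ell(\chi_z(x))=\chi_z(x)^\ell=\chi_z(\ell x)$, and correspondingly $\sigma_\ell(\overline{\chi_z(x)})=\overline{\chi_z(\ell x)}$. As $f(x)\in\mathbb{Z}$ is fixed by $\sigma_\ell$, this yields
\begin{equation*}
  \sigma_\ell\bigl(\hat f(\chi_z)\bigr)=\sum_{x\in G}f(x)\,\overline{\chi_z(\ell x)}.
\end{equation*}

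The key step is then to absorb the twist by $\ell$ using the two defining properties of a class function. Because $\gcd(\ell,e)=1$ and $e=\exp(G)$, multiplication by $\ell$ is a bijection of $G$, so I may reindex the sum by $y=\ell x$; at the same time the identity $f(\ell x)=f(x)$ from \eqref{f-161} lets me replace $f(x)$ by $f(\ell x)$. Combining these,
\begin{equation*}
  \sum_{x\in G}f(x)\,\overline{\chi_z(\ell x)}=\sum_{x\in G}f(\ell x)\,\overline{\chi_z(\ell x)}=\sum_{y\in G}f(y)\,\overline{\chi_z(y)}=\hat f(\chi_z),
\end{equation*}
so $\sigma_\ell$ fixes $\hat f(\chi_z)$. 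As $\ell$ was arbitrary in $\mathbb{Z}_e^*$, the element $\hat f(\chi_z)$ is invariant under the full cyclotomic Galois group and hence rational, which completes the argument.

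I expect the only delicate point to be the bookkeeping in the reindexing: one must verify both that $x\mapsto\ell x$ is a genuine permutation of $G$ (this uses precisely that $\ell$ is a unit modulo the exponent $e$, not merely modulo $n$) and that the Galois action translates into a twist of the argument, $\chi_z(x)\mapsto\chi_z(\ell x)$, so that the class-function relation $f(\ell x)=f(x)$ is exactly the one that applies. Everything else is a routine combination of the algebraic-integer property with invariance under $\mathrm{Gal}(\mathbb{Q}(\omega_e)/\mathbb{Q})$.
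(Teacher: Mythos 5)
Your proof is correct and follows essentially the same route as the paper's: both show $\hat f(\chi_z)$ lies in $\mathbb{Z}[\omega_e]$, then use the class-function property $f(\ell x)=f(x)$ together with the reindexing $x\mapsto \ell x$ to prove invariance under every $\sigma_\ell\in\mathrm{Gal}(\mathbb{Q}(\omega_e)/\mathbb{Q})$, and conclude via $\mathbb{Q}\cap\mathbb{Z}[\omega_e]=\mathbb{Z}$. Your write-up is in fact slightly more careful than the paper's, since you explicitly justify that multiplication by $\ell$ permutes $G$ (using $\gcd(\ell,e)=1$ with $e=\exp(G)$) and you keep the complex conjugate in the definition of $\hat f$ consistent throughout, whereas the paper silently drops it.
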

\begin{proof}Since $f$ is integer-valued,
\begin{equation*}
  \hat{f}(\chi_g)=\sum_{x\in G}f(x)\chi_g(x)
\end{equation*}
is contained in the integer ring $\mathbb{Z}[\omega_e]$, and for every $\ell \in  \mathbb{Z}_e^*$,
\begin{eqnarray*}
  \sigma_\ell(\hat{f}(\chi_g))&=&\sum_{x\in G}f(x)\sigma_\ell(\chi_g(x))=\sum_{x\in G}f(x)(\chi_g(\ell x))\\
  &=&\sum_{x\in G}f(\ell x)(\chi_g(\ell x))\ \ (\mbox{ since $f$ is a class function})\\
  &=&\sum_{x\in G}f(x)\chi_g(x)\\
  &=& \hat{f}(\chi_g).
\end{eqnarray*}
Therefore, $\hat{f}(\chi_g)\in \mathbb{Q}\cap \mathbb{Z}[\omega_e]=\mathbb{Z}$.
\end{proof}
 As an analogue to plateaued functions over finite fields of even characteristic, we define plateaued functions over abelian groups as follows.
 \begin{defn}\label{defn-3.4}Let $G$ be an abelian group of order $n$. Suppose that $p$ is a prime number. Let $f\in \mathbb{C}^G$ be a class function such that for every $\chi\in \widehat{G}$, the Fouerier transform of $f$ at $\chi$ satisfying
  \begin{equation}\label{f-162}
 \widehat{f}(\chi)\equiv k\pmod {p^r}
 \end{equation}
 for some fixed non-negative integers $k,r$. Then $f$ is called $p^r$-plateaued over $G$, where $r$ is the greatest integer $r$ satisfying (\ref{f-162}).\end{defn}
 It is easy to see that the classical plateaued functions over finite fields with even characteristic are special cases of our plateaued functions over abelian groups.
 \begin{example}Let $G=\mathbb{Z}_3\times \mathbb{Z}_3$ and $S$ be a subset of $G$ defined by
  \begin{equation*}
   S=\{(1,0),(2,0),(0,1),(0,2)\}.
\end{equation*}
Let $f$ be the characteristic function of $S$, namely, $f(x)=1$ if $x\in S$ and $0$, otherwise. Then $f$ is a $3$-plateaued function over $G$.
\end{example}
\begin{proof}Indeed, since for every $\ell\in \mathbb{Z}_3^*=\{1,2\}$, we have $\ell S:=\{\ell x: x\in S\}=S$. Thus $f$ is a class function. Moreover,
for every $g=(g_1,g_2)\in G$, we have
\begin{equation*}
  \hat{f}(\chi_g)=\sum_{x\in G}f(x)\chi_g(x)=\omega_3^{g_1}+\omega_3^{-g_1}+\omega_3^{g_2}+\omega_3^{-g_2}.
\end{equation*}
Hence, $\hat{f}(\chi_g)\in \{4,-2,1\}$ and then $\hat{f}(\chi_g)\equiv 1 \pmod 3$. So that $f$ is a $3$-plateaued function over $G$.
\end{proof}

Our main result in this section is as follows.
\begin{thm}\label{thm-3.6} Let $H$ be an abelian group of order $n$ and $S_1$ be a subset of $H$ satisfying $0\notin S_1$ and $\ell S_1=S_1$ for all $\ell \in \mathbb{Z}_e^*$. Let
$G=\mathbb{Z}_2\times H$ and let $S$ be a subset of $G$ defined by
\begin{equation*}
  S=(1,S_1)\cup (0,S_1)\cup \{(1,0)\}.
\end{equation*}
Assume that $p$ divides $d=|S_1|$.
Let $f$ be the characteristic function of $S_1$ and suppose that $f$ is an $p^r$-plateaued function over $H$ with $r\geq 1$. Then the Cayley graph $\Gamma={\rm Cay}(G,S)$ has FR at time $t=\pi/p^{r_0}$ between $(x+1,y)$ and $(x,y)$ for every $(x,y)\in G$, where $r_0=\min(r, v_p(d))\geq 1$.
\end{thm}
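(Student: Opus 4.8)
The plan is to apply Theorem \ref{lem-2} with vertex difference $a=(1,0)$ and candidate time $t=\pi/p^{r_0}$. First I would dispose of condition (1): since $a\ne 0$ and $2a=0$, the element $a=(1,0)$ has order two, and because $\chi_a(u,w)=(-1)^u$, the definition \eqref{f-12} gives $G_0=\{0\}\times H$ and $G_1=\{1\}\times H$. Next I would compute the spectrum. Writing $\chi_{(u,w)}(s_1,s_2)=(-1)^{us_1}\chi_w(s_2)$ and splitting $S=(1,S_1)\cup(0,S_1)\cup\{(1,0)\}$, the two copies of $S_1$ contribute $((-1)^u+1)\sum_{t\in S_1}\chi_w(t)$ while the point $(1,0)$ contributes $(-1)^u$. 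Because $-1\in\mathbb{Z}_e^*$ forces $S_1=-S_1$, the inner sum is real and equals $\widehat f(\chi_w)$, so $\lambda_{(0,w)}=2\widehat f(\chi_w)+1$ whereas $\lambda_{(1,w)}=-1$. The collapse of every $G_1$-eigenvalue to the single value $-1$ is the structural fact driving the whole argument.

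For condition (2), $f$ is a class function, so Lemma \ref{lem-3.3} yields $\widehat f(\chi_w)\in\mathbb{Z}$, whence every $\lambda_g\in\mathbb{Z}$ and $\Gamma$ is integral. For condition (3), the phase on $G_1$ is automatically the constant $e^{\imath t\lambda_g}=e^{-\imath\pi/p^{r_0}}$. On $G_0$ I would invoke the $p^r$-plateaued hypothesis (Definition \ref{defn-3.4}) to write $\widehat f(\chi_w)=k+p^r m_w$ with $m_w\in\mathbb{Z}$, so that
$$e^{\imath t\lambda_{(0,w)}}=e^{\imath\pi(2k+1)/p^{r_0}}\,e^{2\pi\imath\,p^{\,r-r_0}m_w}.$$
Since $r_0=\min(r,v_p(d))\le r$, the exponent $p^{\,r-r_0}m_w$ is an integer and the second factor equals $1$, so the phase is constant on $G_0$. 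Thus condition (3) holds with $\alpha+\beta=e^{\imath\pi(2k+1)/p^{r_0}}$ and $\alpha-\beta=e^{-\imath\pi/p^{r_0}}$, and Theorem \ref{lem-2} delivers the FR.

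It then remains to read off $\alpha,\beta$ and confirm admissibility. The normalization $|\alpha|^2+|\beta|^2=1$ is automatic from $|\alpha+\beta|^2+|\alpha-\beta|^2=2$. To simplify the phase I would use $\widehat f(\chi_0)=|S_1|=d$, which forces $k\equiv d\pmod{p^r}$; combined with $r_0\le v_p(d)$ this gives $p^{r_0}\mid k$, so that $\alpha+\beta=e^{\imath\pi/p^{r_0}}$ and therefore $\alpha=\cos(\pi/p^{r_0})$, $\beta=\imath\sin(\pi/p^{r_0})$. As $r_0\ge 1$ forces $p^{r_0}\ge 2$, we get $\beta\ne 0$, and $\alpha\ne 0$ whenever $p^{r_0}\ne 2$, so the FR is genuine, the unique boundary case $p^{r_0}=2$ degenerating to PST.

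The hard part is the bookkeeping around $r_0=\min(r,v_p(d))$, whose two clauses play distinct roles: the bound $r_0\le r$ is what makes the phase constant on $G_0$ and so secures condition (3), while the bound $r_0\le v_p(d)$, read off from $\widehat f(\chi_0)=d$, is what delivers $p^{r_0}\mid k$ and hence the clean, nondegenerate values of $\alpha$ and $\beta$. Keeping these two divisibility demands separate, and noting the $p^{r_0}=2$ degeneration, is the only delicate point; the remainder is the routine character computation sketched above.
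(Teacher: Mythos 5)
Your proposal is correct and follows essentially the same route as the paper: the same spectral computation giving $\lambda_{(1,y)}=-1$ on $G_1$ and $\lambda_{(0,y)}=2\hat f(\chi_y)+1$ on $G_0$, the same use of the plateaued hypothesis (for constancy of the phase modulo $p^{r}$) together with $p^{r_0}\mid d$ (to normalize the constant to $e^{\imath\pi/p^{r_0}}$), and the same appeal to Theorem \ref{lem-2}. Your extra observations---verifying integrality via Lemma \ref{lem-3.3} and noting that the boundary case $p^{r_0}=2$ forces $\alpha=\cos(\pi/2)=0$, so the revival degenerates to PST rather than genuine $(\alpha,\beta)$-FR with $\alpha\beta\neq 0$---are careful refinements the paper's proof passes over silently, but they do not change the substance of the argument.
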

\begin{proof}For every $(x,y)\in G$, the corresponding eigenvalue of $\Gamma$ is
\begin{equation*}
  \lambda_{x,y}=(-1)^x+(-1)^x\sum_{z\in S_1}\chi_y(z)+\sum_{z\in S_1}\chi_y(z)=(-1)^x+((-1)^x+1)\hat{f}(\chi_y).
\end{equation*}
Due to $a=(x+1,y)-(x,y)=(1,0)$, it is easy to see that
\begin{equation*}
  G_0=\{(0,h):h\in H\}, \mbox{ and } G_1=\{(1,h):h\in H\}.
\end{equation*}
Thus
\begin{equation*}
  \lambda_{x,y}=\left\{\begin{array}{cc}
                         1+2\hat{f}(\chi_y), & \mbox{ if $(x,y)\in G_0$}, \\
                         -1, & \mbox{ if $(x,y)\in G_1$}.
                       \end{array}
  \right.
\end{equation*}
Therefore, for every $(0,y)\in G_0$, we have
\begin{equation*}
  e^{\imath t \lambda_{0,y}}=e^{\imath \frac{\pi}{p^{r_0}}(\lambda_{0,y}-\lambda_{0,0}+\lambda_{0,0})}=e^{\imath \frac{\pi}{p^{r_0}}(2(\hat{f}(\chi_y)-\hat{f}(\chi_0))+1+2d)}.
\end{equation*}
Since $\hat{f}(\chi_y)-\hat{f}(\chi_0) \equiv 0 \pmod {p^r}$, and $d\equiv 0 \pmod {p^{v_p(d)}}$, we get
\begin{equation*}
  e^{\imath t \lambda_{0,y}}=e^{\imath \frac{\pi}{p^{r_0}}}.
\end{equation*}
Meanwhile, we have
\begin{equation*}
  e^{\imath t \lambda_{1,y}}=e^{-\imath \frac{\pi}{p^{r_0}}}.
\end{equation*}
Thus we get the desired result by Theorem \ref{lem-2}.
\end{proof}
Next we present an example to illustrate Theorem \ref{thm-3.6}.
\begin{example}Let $H=\mathbb{Z}_9$ and let $S_1$ be defined by
\begin{equation*}
  S_1=\{1,2,4,5,7,8\}.
\end{equation*}
Let $G=\mathbb{Z}_2\times \mathbb{Z}_9$ and let $S$ be defined by
\begin{equation*}
  S=\{(0,1),(0,2),(0,4),(0,5),(0,7),(0,8),(1,1),(1,2),(1,4),(1,5),(1,7),(1,8),(1,0)\}.
\end{equation*}
Then $\Gamma={\rm Cay}(G,S)$ has FR at time $t=\pi/3$.
\end{example}
\begin{proof}Let $f$ be the characteristic function of the set $S_1$. Then for every $y\in H$, we have
\begin{equation*}
  \hat{f}(\chi_y)=\sum_{x\in H}f(x)\chi_x(y)=\omega_9^{y}+\omega_9^{2y}+\omega_9^{4y}+\omega_9^{5y}+\omega_9^{7y}+\omega_9^{8y}=c(y,9),
\end{equation*}
where $c(y,9)$ is a Ramanujan function. Thus
\begin{equation*}
  \widehat{f}(\chi_0)=6, \widehat{f}(\chi_1)=\widehat{f}(\chi_2)=\widehat{f}(\chi_4)=\widehat{f}(\chi_5)=\widehat{f}(\chi_7)=\widehat{f}(\chi_8)=0, \widehat{f}(\chi_3)=\widehat{f}(\chi_6)=-3.
\end{equation*}
Therefore, $f$ is a $3$-plateaued function. Moreover $3|d$. By Theorem \ref{thm-3.6}, $\Gamma$ has FR at time $\pi/3$.

We have checked the results by a Magma programme, it shows that
\begin{equation*}
  H(\pi/3)={\rm diag}\left(\underset{9 \ \ copies}{\underbrace{{\left(
                                             \begin{array}{cc}
                                               -\frac{1}{2} & \frac{\sqrt{3}}{2} \\
                                               \frac{\sqrt{3}}{2} & -\frac{1}{2} \\
                                             \end{array}
                                           \right), \cdots, \left(
                                             \begin{array}{cc}
                                               -\frac{1}{2} & \frac{\sqrt{3}}{2} \\
                                               \frac{\sqrt{3}}{2} & -\frac{1}{2} \\
                                             \end{array}
                                           \right)}
  }}\right).
\end{equation*}
This completes the proof.
\end{proof}
Note that the transfer matrix at time $t=\pi/3$ in the above example is exactly the same as that in Example \ref{exam-2}. Combining these two examples, we know that for a given abelian group of even order, there may have different choices of the connection sets such that the resulting graphs have FR at a different time.
\section{Cubelike graphs}\label{S4}
Let $\mathbb{F}_{2^n}$ be the finite field with $2^n$ elements. A Cayley graph ${\rm Cay}(G,S)$ is called a cubelike graph if $G$ is the additive group of some finite field $\mathbb{F}_{2^n}$. Since $(\mathbb{F}_{2^n},+)$ is an elementary $2$-group, the classical Hamming graph is a special kind of cubelike graphs.
Note that every nonzero element in $\mathbb{F}_{2^n}$ has order two. Moreover, the map $x\mapsto x+a$ is an isomorphism of the Cayley graph ${\rm Cay}(G,S)$ for every $a\in G$. Thus, in this section, we can fix $a$ as an arbitrary nonzero element in $G$. Thus, in the sequel, we prescribe $a$ as $(1,0,\cdots,0)\in \mathbb{F}_2^n$. Meanwhile, we can assume that $a$ is contained in the connection set.
In this section, we show that many cubelike graphs admit FR. Our first result in this section is as follows.
\begin{thm}\label{thm-4.1}Let $\Gamma={\rm Cay}(\mathbb{F}_{2^n}, S)$ be a cubelike graph. Suppose that
\begin{equation*}
  S=(0,S_0)\cup (1,S_1)\cup \{(1,\mathbf{0})\},
\end{equation*}
where $S_0,S_1\subseteq \mathbb{F}_2^{n-1}$, $\mathbf{0}\not\in S_1$. Denote $|S_0|=d_0$ and $|S_1|=d_1$. If $\min(v_2(d_0+d_1),v_2(d_0-d_1))\geq 3$ and $M\geq 8$, then $\Gamma$ exhibits FR at some time $t$, where $M$ is defined by (\ref{f-19}). \end{thm}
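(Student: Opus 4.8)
The plan is to verify the three conditions of Theorem \ref{lem-2} for the vertex pair differing by $a=(1,\mathbf{0})\in\Ff_2\times\Ff_2^{n-1}$, and to exhibit FR at the single time $t=2\pi/M$. Conditions (1) and (2) will be free: every nonzero element of $\Ff_{2^n}\cong\Ff_2^n$ has order two, so $a$ has order two, and the eigenvalues $\lambda_g=\sum_{s\in S}(-1)^{g\cdot s}$ are sums of $\pm1$, hence integers, so $\Gamma$ is automatically integral. Thus the whole content lies in condition (3): producing nonzero $\alpha,\beta$ with $e^{\imath t\lambda_g}=\alpha+\beta$ on $G_0$ and $\alpha-\beta$ on $G_1$.

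First I would compute the spectrum. Writing $g=(g_1,g')$ and $W_T(g')=\sum_{s'\in T}(-1)^{g'\cdot s'}$, a direct character computation gives $\lambda_{(0,g')}=W_{S_0}(g')+W_{S_1}(g')+1$ and $\lambda_{(1,g')}=W_{S_0}(g')-W_{S_1}(g')-1$, while $a=(1,\mathbf{0})$ makes $G_0=\{(0,g')\}$ and $G_1=\{(1,g')\}$. Taking $t=2\pi/M$, the definition \eqref{f-19} forces $M$ to divide every eigenvalue difference within $G_0$ and within $G_1$, so $e^{\imath t\lambda_g}$ is constant on each part, equal to $\zeta_0=e^{2\pi\imath d/M}$ on $G_0$ (with $d=|S|=d_0+d_1+1$) and to $\zeta_1=e^{2\pi\imath(d_0-d_1-1)/M}$ on $G_1$. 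Setting $\alpha=(\zeta_0+\zeta_1)/2$, $\beta=(\zeta_0-\zeta_1)/2$, the identity $|\zeta_0|=|\zeta_1|=1$ gives $|\alpha|^2+|\beta|^2=1$ and $\overline{\alpha}\beta+\alpha\overline{\beta}=0$ for free, so by Theorem \ref{lem-2} it remains only to show $\alpha\beta\neq0$. Since $\beta=0$ exactly when $M\mid(d-\lambda_{g_1})$ and $\alpha=0$ exactly when $2(d-\lambda_{g_1})/M$ is an odd integer, and $d-\lambda_{g_1}=2(d_1+1)$, the entire problem collapses to the arithmetic statement $M\nmid 4(d_1+1)$.

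The decisive step, which I expect to be the main obstacle, is a divisibility lemma: the odd part $m$ of $M$ divides $d_1$. To prove it I would set $A(g')=d_0-W_{S_0}(g')$ and $B(g')=d_1-W_{S_1}(g')$, so that the quantities defining $M_0$ and $M_1$ are $A(g')+B(g')$ and $A(g')-B(g')$. Since $m\mid M\mid M_0,M_1$, the odd number $m$ divides both $A(g')\pm B(g')$, hence $m\mid 2A(g')$ and $m\mid 2B(g')$, hence $m\mid A(g')$ and $m\mid B(g')$ for every $g'$. Summing $B(g')$ over all $g'\in\Ff_2^{n-1}$ and using $\mathbf{0}\notin S_1$, which makes $\sum_{g'}W_{S_1}(g')=0$, yields $\sum_{g'}B(g')=2^{n-1}d_1$; as $m$ divides each summand and $m$ is odd, $m\mid d_1$. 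This is exactly where the hypothesis $\mathbf{0}\notin S_1$ enters in an essential way.

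Finally I would close the argument by splitting on $m$. If $m>1$, then $m\mid d_1$ forces $\gcd(m,d_1+1)=1$, so $m\nmid 4(d_1+1)$ and a fortiori $M\nmid 4(d_1+1)$. If instead $m=1$, then $M$ is a power of two and $M\geq 8$ gives $v_2(M)\geq 3$; the hypothesis $\min(v_2(d_0+d_1),v_2(d_0-d_1))\geq 3$ forces $4\mid d_1$, so $d_1+1$ is odd and $v_2(4(d_1+1))=2<v_2(M)$, again giving $M\nmid 4(d_1+1)$. In both cases $\alpha\beta\neq0$, so Theorem \ref{lem-2} yields $(\alpha,\beta)$-FR between $(x_0,x')$ and $(x_0+1,x')$ at time $t=2\pi/M$, completing the proof.
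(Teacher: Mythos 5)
Your proof is correct, and its skeleton is the same as the paper's: compute the spectrum (writing $W_T(\mathbf{x})=\sum_{\mathbf{s}\in T}(-1)^{\mathbf{s}\cdot\mathbf{x}}$, one has $\lambda_{(0,\mathbf{x})}=W_{S_0}(\mathbf{x})+W_{S_1}(\mathbf{x})+1$ and $\lambda_{(1,\mathbf{x})}=W_{S_0}(\mathbf{x})-W_{S_1}(\mathbf{x})-1$), then verify condition (3) of Theorem \ref{lem-2} at a time where the eigenvalues are constant modulo a suitable integer on each of $G_0$, $G_1$. The genuine difference is the key arithmetic input. The paper invokes Lemma \ref{lem-M}: $M$ divides $|G|=2^n$, so $M=2^r$ with $r\geq 3$, and it then works at $t=2\pi/2^{\kappa}$ with $\kappa=\min(r,v_2(d_0+d_1),v_2(d_0-d_1))\geq 3$, where the two constants are $e^{\pm\imath 2\pi/2^{\kappa}}$, making $\alpha=\cos(2\pi/2^{\kappa})$ and $\beta=\imath\sin(2\pi/2^{\kappa})$ visibly nonzero. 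You instead work at $t=2\pi/M$, reduce the whole problem to $M\nmid 4(d_1+1)$ (which is indeed exactly equivalent to $\alpha\beta\neq 0$ at that time, since $d-\lambda_{(1,\mathbf{0})}=2(d_1+1)$), and, never having used Lemma \ref{lem-M}, you control the odd part $m$ of $M$ by hand: your lemma that $m\mid d_1$ (from $m\mid A(g')\pm B(g')$, hence $m\mid B(g')$, summed over $g'$ using $\mathbf{0}\notin S_1$) is correct, as is the ensuing case split. Worth noting, though: your case $m>1$ is vacuous, because Lemma \ref{lem-M} applies verbatim here and forces $m=1$; had you quoted it, your argument would collapse to your $m=1$ case, which is essentially the paper's proof run at the time $2\pi/M$ rather than $2\pi/2^{\kappa}$ (both choices work). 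What your route buys is self-containedness --- no appeal to Lemma \ref{lem-M} --- plus a sharp criterion for nonvanishing of $\alpha\beta$; what the paper's buys is brevity, since the power-of-two structure of $M$ is free.
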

\begin{proof}The additive group of $\mathbb{F}_{2^n}$ is an elementary $2$-group isomorphic to $\mathbb{F}_2^n$, the additive group of the $n$-dimensional vector space over $\mathbb{F}_2$.  Take $\mathbf{a}=(1,\mathbf{0}_{n-1})$, where $\mathbf{0}_{n-1}\in \mathbb{F}_2^{n-1}$. We can assume that $\mathbf{a}\in S$, for otherwise, we use an isomorphism of $\Gamma$ to replace an element in $S$ with $\mathbf{a}$. Then
\begin{equation*}
  G_0=\{(0, \mathbf{x}): \mathbf{x}\in \mathbb{F}_2^{n-1}\}, \mbox{ and }  G_1=\{(1, \mathbf{x}): \mathbf{x}\in \mathbb{F}_2^{n-1}\},
\end{equation*}
where $G_0$ and $G_1$ are defined by (\ref{f-12}). Write $S=(0,S_0)\cup (1,S_1)\cup \{\mathbf{a}\}$ ($0\not\in S_1$) and denote $|S_i|=d_i, i=0,1$. By the assumption that $|S|\equiv 1 \pmod 8$, we can assume that $d_0+d_1+1=1+2^\nu d'$, where $\nu\geq 3$. For every $(0,\mathbf{x})\in G_0$, the corresponding eigenvalue is
\begin{equation*}
  \lambda_{(0,\mathbf{x})}=\sum_{\mathbf{s}_0 \in S_0}(-1)^{\mathbf{s}_0\cdot \mathbf{x}}+\sum_{\mathbf{s}_1 \in S_1}(-1)^{\mathbf{s}_1\cdot \mathbf{x}}+1.
\end{equation*}
Particularly, $\lambda_{0,\mathbf{0}}=d_0+d_1+1$.
Meanwhile, for every $(1,\mathbf{x})\in G_1$, the corresponding eigenvalue is
\begin{equation*}
  \lambda_{(1,\mathbf{x})}=\sum_{\mathbf{s}_0 \in S_0}(-1)^{\mathbf{s}_0\cdot \mathbf{x}}-\sum_{\mathbf{s}_1 \in S_1}(-1)^{\mathbf{s}_1\cdot \mathbf{x}}+(-1).
\end{equation*}
Particularly, $\lambda_{1,\mathbf{0}}=d_0-d_1-1$.

Since $M(\geq 8)$ is a divisor of $ |\mathbb{F}_{2^n}|=2^n$ by Lemma \ref{lem-M}, we can write $M=2^r$, $r\geq 3$.

For a different $\mathbf{x'}\in \mathbb{F}_2^{n-1}$, by the definition of the integer $M$, it is obvious that
\begin{equation*}
   \lambda_{(0,\mathbf{x})}- \lambda_{(0,\mathbf{x'})}\equiv 0 \pmod {2^r}, \lambda_{(1,\mathbf{x})}- \lambda_{(1,\mathbf{x'})}\equiv 0 \pmod {2^r}.
\end{equation*}
Write $\min(r, v_2(d_0+d_1), v_2(d_0-d_1))=\kappa$. Then $\kappa\geq 3$.
Take $t=\frac{2\pi}{2^{\kappa}}$. Then
\begin{equation*}
  e^{\imath t \lambda_{x,\mathbf{y}}}=\left\{\begin{array}{ll}
                                               e^{\imath \frac{2\pi}{2^\kappa}(d_0+d_1+1)}=e^{\imath \frac{2\pi}{2^\kappa}}, & \mbox{ if $(x,\mathbf{y})\in G_0$}, \\
                                               e^{\imath \frac{2\pi}{2^\kappa}(d_0-d_1-1)}=e^{-\imath \frac{2\pi}{2^\kappa}}, & \mbox{ if $(x,\mathbf{y})\in G_1$}.
                                             \end{array}
  \right.
\end{equation*}
Then we obtain the desired result by Theorem \ref{lem-2}.
\end{proof}
Below we present a simple example as a replenishment to Theorem \ref{thm-4.1}.
\begin{example}\label{exam-4.2} Let $G=\mathbb{F}_2^5$ and let the connection set $S$ be defined by
\begin{equation*}
  S=\{(10000)\}\cup \{(0,s): s\in S_0\}\cup \{(1,s): s\in S_0\}
\end{equation*}
where
\begin{equation*}
  S_0=\{(1100),(0011),(1011),(0111), (1101),(1110)\}.
\end{equation*}
Then $\Gamma={\rm Cay}(G,S)$ has FR at time $t=\pi/4$.
\end{example}
\begin{proof}Take $\mathbf{a}=(10000)$. Then
\begin{equation*}
  G_0=\{(0vxyz): v,x,y,z \in \{0,1\}\},  G_1=\{(1vxyz): v,x,y,z \in \{0,1\}\}.
\end{equation*}
The eigenvalues of $\Gamma$ are
\begin{eqnarray*}
  \lambda_{(uxyz)}&=&(-1)^u+(-1)^{v+x}+(-1)^{y+z}+(-1)^{v+y+z}+(-1)^{x+y+z}+(-1)^{v+x+z}\\
  &&+(-1)^{v+x+y}+(-1)^{u+v+x}+(-1)^{u+y+z}+(-1)^{u+v+y+z}\\
  &&+(-1)^{u+x+y+z}+(-1)^{u+v+x+z}+(-1)^{u+v+x+y}.
\end{eqnarray*}
Thus
\begin{equation*}
  \lambda_{(00000)}=13, \lambda_{(0vxyz)}\in \{5,-3\}, \lambda_{(1vxyz)}=-1, v,x,y,z\in \{0,1\}.
\end{equation*}
It is obvious that
\begin{equation*}
  \lambda_{\mathbf{x}}\equiv \left\{\begin{array}{ll}
                               5 \pmod 8, & \mbox{ if $\mathbf{x}\in G_0$}, \\
                               -1 \pmod 8, & \mbox{ if $\mathbf{x}\in G_1$}.
                             \end{array}\right.
,
\end{equation*}
Take $t=\pi/4$. Then
\begin{equation*}
  e^{\imath t \lambda_\mathbf{x}}=\left\{\begin{array}{ll}
                                           e^{\imath \frac{\pi}{4}}, & \mbox{ if $\mathbf{x}\in G_0$},  \\
                                            e^{-\imath \frac{\pi}{4}}, & \mbox{ if $\mathbf{x}\in G_1$}.
                                         \end{array}
  \right.
\end{equation*}
This completes the proof.
\end{proof}
Our next result is based on bent functions and plateaued functions over $\mathbb{F}_2^n$. Before stating our constructions, we recall some basic properties about such kind of functions.

Let $f: \mathbb{F}_2^n\rightarrow \mathbb{F}_2$ be a nonzero Boolean function. Since the additive group $(\mathbb{F}_2^n,+)$ is isomorphic to the additive group of the finite field $\mathbb{F}_{2^n}$. Then $f$ viewed as a function on $\mathbb{F}_{2^n}$ has a (unique) trace expansion of the form:
\begin{equation*}
  f(x)=\sum_{j\in \Gamma_n}{\rm Tr}^{o(j)}_1(a_jx^j)+\epsilon (1+x^{2^n-1}), \forall x\in \mathbb{F}_{2^n},
\end{equation*}
where $\Gamma_n$ is the set of integers obtained by choosing one element in each cyclotomic coset of 2
modulo $2^n-1$, $o(j)$ is the size of the cyclotomic coset of 2 modulo $2^n-1$ containing $j$, $a_j\in \mathbb{F}_{2^{o(j)}}$
and $\epsilon = wt(f)$ modulo 2 where $wt(f)$ is the Hamming weight of the image vector of $f$, that is, the
number of $x$ such that $f(x) = 1$. Denote ${\rm supp}(f)=\{x\in \mathbb{F}_{2^n}: f(x)=1\}$. It is obvious that the map $f\mapsto {\rm supp}(f)$ gives a one-to-one mapping from the set of Boolean functions to the power set of $\mathbb{F}_{2^n}$.

The Walsh-Hadamard transform of $f$ is defined by
\begin{equation*}
  \widehat{f}_w(a)=\sum_{x\in \mathbb{F}_{2^n}}(-1)^{f(x)+{\rm Tr}^n_1(ax)}, \forall a\in \mathbb{F}_{2^n}.
\end{equation*}
\begin{defn}Let $n=2k$ with $k$ being a positive integer. A Boolean function $f$ is called bent if $\widehat{f}_w(a)\in \{\pm 2^{k}\}$  for all $a\in \mathbb{F}_{2^n}$.\end{defn}
It is well-known that bent function exists on $\mathbb{F}_{2^{2k}}$ for every $k$. Moreover, for every positive integer $k$, there exist many families of such functions. Semi-bent functions also exist in $\mathbb{F}_{2^n}$ for all integer $m\geq 1$, e.g., in \cite{Mesnager2016}.

Let $f: \mathbb{F}_2^n\rightarrow \mathbb{F}_2$ be a Boolean function. $S={\rm supp}(f)\subseteq \mathbb{F}_{2^n}$. For the cubelike graph $\Gamma={\rm Cay}(\mathbb{F}_{2^n},S)$, its eigenvalues are $\lambda_0=|S|$ and
\begin{equation}\label{f-e14}
 \lambda_x=\sum_{z\in S}\chi_x(z)=\sum_{y\in \mathbb{F}_{2^m}}\frac{1-(-1)^{f(y)}}{2}\chi_x(y)=-\frac{1}{2}\widehat{f}_w(x), 0\neq x\in \mathbb{F}_{2^n}.
\end{equation}

For bent and semi-bent functions, the following results are known in the literature, e.g., on pages 72 and 422 in \cite{Mesnager2016}.

\begin{lem}\label{lem-4.4} Let $n=2k$ be a positive even integer. Let $f$ be a Boolean function on $\mathbb{F}_{2^n}$ and let $S={\rm supp}(f)$.

(1) If $f$ be a bent function, then $|S|=2^{n-1}\pm 2^{k-1}$. Moreover, define a function $\widetilde{f}$, called the dual of $f$, by
\begin{equation*}
  \widehat{f}_w(x)=2^k(-1)^{\widetilde{f}(x)}.
\end{equation*}
Then $\widetilde{f}$ is also a bent function. As a consequence, the numbers of occurrences of $\widehat{f}_w(x)$ taking the values $\pm 2^k$ are $2^{n-1}\pm 2^{k-1}$ or $2^{n-1}\mp 2^{k-1}$.

 (2) If $f$ is a semi-bent function, then $|S|\in \{2^{n-1}, 2^{n-1}\pm 2^{k-1}\}$. Moreover, we have the following table.
 \begin{center}Table 1. Walsh spectrum
of semi-bent functions
$ f$ with $f(0)=0$
\begin{tabular}{|c|c|}
  \hline
  Value of $\widehat{f}(x),x\in \mathbb{F}_{2^m}$ & Frequency \\
    \hline
 $0$ & $2^{n-1}+2^{n-2}$ \\
   \hline
  $2^{k+1}$ & $2^{n-3}+2^{k-2}$ \\
    \hline
   $-2^{k+1}$ & $2^{n-3}-2^{k-2}$ \\
  \hline
\end{tabular}
\end{center} \end{lem}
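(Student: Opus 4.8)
The plan is to reduce both parts to three standard identities for the Walsh--Hadamard transform on $\mathbb{F}_{2^n}$: its value at the origin $\widehat{f}_w(0)=\sum_{x\in\mathbb{F}_{2^n}}(-1)^{f(x)}=2^n-2|S|$, Parseval's relation $\sum_{a}\widehat{f}_w(a)^2=2^{2n}$, and the inversion formula $\sum_{a}\widehat{f}_w(a)(-1)^{{\rm Tr}^n_1(ab)}=2^n(-1)^{f(b)}$. First I would read off the cardinalities of $S$ from $|S|=2^{n-1}-\tfrac12\widehat{f}_w(0)$: in the bent case the definition forces $\widehat{f}_w(0)\in\{\pm 2^k\}$, giving $|S|=2^{n-1}\mp 2^{k-1}$, while in the semi-bent case the admissible origin values $\widehat{f}_w(0)\in\{0,\pm 2^{k+1}\}$ determine the stated cardinalities in the same way.

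Next, for the dual in part (1): since $\widehat{f}_w(a)\in\{\pm 2^k\}$, the relation $\widehat{f}_w(a)=2^k(-1)^{\widetilde f(a)}$ legitimately defines a Boolean function $\widetilde f$. To prove $\widetilde f$ is bent I would substitute $(-1)^{\widetilde f(a)}=2^{-k}\widehat{f}_w(a)$ into $\widehat{\widetilde f}_w(b)=\sum_{a}(-1)^{\widetilde f(a)+{\rm Tr}^n_1(ab)}$ and apply the inversion formula, obtaining $\widehat{\widetilde f}_w(b)=2^{-k}\cdot 2^n(-1)^{f(b)}=2^k(-1)^{f(b)}\in\{\pm 2^k\}$, so $\widetilde f$ is bent and $\widetilde{\widetilde f}=f$. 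The frequency count then drops out: $\widehat{f}_w(a)=-2^k$ precisely when $a\in{\rm supp}(\widetilde f)$, so $-2^k$ is attained $|{\rm supp}(\widetilde f)|=2^{n-1}\pm 2^{k-1}$ times (as $\widetilde f$ is itself bent) and $+2^k$ is attained the complementary $2^{n-1}\mp 2^{k-1}$ times.

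For the table in part (2) I would introduce $N_0,N_+,N_-$ for the number of $a$ with $\widehat{f}_w(a)$ equal to $0,2^{k+1},-2^{k+1}$ respectively. Parseval gives $2^{2k+2}(N_++N_-)=2^{2n}$, whence $N_++N_-=2^{n-2}$ and $N_0=2^n-2^{n-2}=2^{n-1}+2^{n-2}$, which is the first row of the table. To separate $N_+$ and $N_-$ I would evaluate the inversion formula at $b=0$, using the normalisation $f(0)=0$ to get $\sum_{a}\widehat{f}_w(a)=2^n$, hence $2^{k+1}(N_+-N_-)=2^n$ and $N_+-N_-=2^{k-1}$; solving the two linear relations yields $N_+=2^{n-3}+2^{k-2}$ and $N_-=2^{n-3}-2^{k-2}$, exactly as tabulated.

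I expect the only genuinely non-routine step to be showing that the dual $\widetilde f$ is again bent, as this is the single place where the inversion formula does real work rather than serving as bookkeeping; the remaining computations are short manipulations of the three identities. A second point that merits care is the role of the hypothesis $f(0)=0$ in part (2): Parseval alone fixes only the symmetric sum $N_++N_-$, and it is the first-moment identity $\sum_{a}\widehat{f}_w(a)=2^n(-1)^{f(0)}$ that breaks the symmetry between $N_+$ and $N_-$, so this normalisation is precisely what pins down the asymmetric frequencies.
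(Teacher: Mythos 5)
The paper does not prove Lemma \ref{lem-4.4} at all: it quotes the lemma as known, with a pointer to pages 72 and 422 of Mesnager's book. Your derivation from the three standard identities ($\widehat f_w(0)=2^n-2|S|$, Parseval, and Fourier inversion) is therefore not matched against any argument in the paper, but it is the standard textbook one, and it is correct and complete for part (1) (support size, well-definedness and bentness of the dual $\widetilde f$, and the $\pm 2^k$ frequencies read off from $|\mathrm{supp}(\widetilde f)|$) as well as for all three frequencies in the table of part (2), including your observation that the hypothesis $f(0)=0$ is exactly what pins down $N_+-N_-$.

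The one step that fails is the support-size claim in part (2). You assert that the admissible origin values $\widehat f_w(0)\in\{0,\pm 2^{k+1}\}$ ``determine the stated cardinalities in the same way,'' but your own formula $|S|=2^{n-1}-\tfrac{1}{2}\widehat f_w(0)$ yields $|S|\in\{2^{n-1},\,2^{n-1}\pm 2^{k}\}$, which is not the stated set $\{2^{n-1},\,2^{n-1}\pm 2^{k-1}\}$. Executed honestly, your method contradicts the lemma at this point rather than confirming it. The root cause is an error (most likely a typo echoing the bent-case exponent) in the lemma itself: the stated cardinalities are inconsistent with the lemma's own table, which forces $\widehat f_w(0)\in\{0,\pm 2^{k+1}\}$ and hence a deviation of $2^k$, not $2^{k-1}$, from $2^{n-1}$ whenever it is nonzero. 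You should have flagged this discrepancy instead of asserting agreement; as written, that assertion is the one false sentence in an otherwise correct proof. The correction also matters downstream: in the semi-bent case of Corollary \ref{bentconstruction}, the eigenvalue $\lambda_{(0\mathbf{0}_n)}=1+2|S_0|$ is congruent to $1 \pmod{2^{k+1}}$ (as the FR argument at time $\pi/2^{k+1}$ requires) only with the corrected weight $|S_0|\in\{2^{n-1},\,2^{n-1}\pm 2^{k}\}$, whereas the weight as printed in the lemma would break that congruence.
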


It is known also that if $f$ is a bent function, then so is its complementary function, i.e, $g=1+f$.

The following two constructions are actually corollaries of Theorem \ref{thm-3.6}.
\begin{cor}\label{bentconstruction}Let $n=2k\geq 4$ for a positive integer $k$ and $f$ be a bent (resp. semi-bent) function on $\mathbb{F}_2^n$. Denote $S_0=supp(f)$ and define
\begin{equation*}
  S=\{(1\mathbf{0}_{n})\}\cup \{(0,s):\mathbf{s}\in S_0\}\cup \{(1,s): \mathbf{s}\in S_0\}\subset \mathbb{F}_2^{n+1}.
\end{equation*}
Then the cubelike graph $\Gamma={\rm Cay}(\mathbb{F}_2^{n+1},S)$ has FR at the time $t=\frac{\pi}{2^k}$ (resp. $t=\frac{\pi}{2^{k+1}}$ ).
\end{cor}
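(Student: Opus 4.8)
The plan is to read the corollary off Theorem~\ref{thm-3.6}: the set $S$ is exactly the doubling $(1,S_0)\cup(0,S_0)\cup\{(1,\mathbf 0_n)\}$ over $G=\mathbb{Z}_2\times H$ with $H=\mathbb{F}_2^{n}$ and $S_1=S_0=\mathrm{supp}(f)$, so it suffices to set up the hypotheses and then apply the FR criterion of Theorem~\ref{lem-2}. First I would normalise to $f(0)=0$, so that $\mathbf 0\notin S_0$; replacing $f$ by $1+f$ complements $S_0$ and preserves bentness (resp. semi-bentness), hence this is harmless. Next I note that $H=\mathbb{F}_2^{n}$ has exponent $e=2$, so $\mathbb{Z}_e^*=\{1\}$ and the class-function condition $\ell S_1=S_1$ of Theorem~\ref{thm-3.6} holds trivially; this is what licenses feeding a bent/semi-bent support into that theorem.

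The engine is the identity \eqref{f-e14} relating the abelian Fourier transform of the indicator $\mathbf 1_{S_0}$ to the Walsh--Hadamard transform of $f$: I would record $\widehat{\mathbf{1}_{S_0}}(\chi_x)=-\tfrac12\widehat f_w(x)$ for $x\neq 0$ and $\widehat{\mathbf{1}_{S_0}}(\chi_0)=|S_0|=d$. Inserting the bent spectrum $\widehat f_w(x)\in\{\pm 2^{k}\}$ from Lemma~\ref{lem-4.4}(1) gives $\widehat{\mathbf{1}_{S_0}}(\chi_x)\in\{\pm2^{k-1}\}$ for $x\neq0$, while the semi-bent spectrum of Lemma~\ref{lem-4.4}(2) gives $\widehat{\mathbf{1}_{S_0}}(\chi_x)\in\{0,\pm2^{k}\}$. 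In either case the list of Fourier values is highly congruent $2$-adically, which exhibits $\mathbf 1_{S_0}$ as a $2^r$-plateaued function in the sense of Definition~\ref{defn-3.4} and, more to the point, controls the revival time.

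Now I would compute the spectrum directly. Since $a=(1,\mathbf 0_n)$ we have $\chi_{(x,y)}(a)=(-1)^x$, so $G_0=\{(0,y)\}$ and $G_1=\{(1,y)\}$ as in \eqref{f-12}, and a one-line character sum gives $\lambda_{(0,y)}=1+2\,\widehat{\mathbf{1}_{S_0}}(\chi_y)$ together with the constant $\lambda_{(1,y)}=-1$. The decisive point is that on $G_0$ the doubling turns the Fourier values into $\{1\pm 2^{k}\}\cup\{1+2d\}$ in the bent case and $\{1,\,1\pm2^{k+1}\}\cup\{1+2d\}$ in the semi-bent case; since $d$ carries enough factors of $2$, the term $1+2d$ joins the same residue class, and all eigenvalues on $G_0$ lie in a single congruence class modulo $2^{k+1}$. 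Setting $t$ to the stated value I would then check that $e^{\imath t\lambda_g}$ collapses to one value $\alpha+\beta$ on $G_0$ and to $\alpha-\beta=e^{-\imath t}$ on $G_1$; Theorem~\ref{lem-2} then delivers $(\alpha,\beta)$-FR. In the bent case $t=\pi/2^{k}$ makes $e^{\imath t\lambda_g}=-e^{\imath\pi/2^{k}}$ on $G_0$ and $e^{-\imath\pi/2^{k}}$ on $G_1$; the semi-bent case is identical in structure, the revival time being read off from the common modulus of the eigenvalues on $G_0$.

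The step I expect to be the main obstacle is the $2$-adic bookkeeping that simultaneously pins the exact time and certifies that the revival is genuine. One must verify that every eigenvalue difference on $G_0$ is divisible by the power of $2$ dictated by $t$ (so that $e^{\imath t\lambda_g}$ is truly constant there), and then that the half-sum $\alpha$ and half-difference $\beta$ of $\alpha\pm\beta$ (taken up to a global phase) are both nonzero; concretely $|\beta|=|\cos(\pi/2^{k})|$, which is nonzero \emph{precisely} because $k\ge2$. This is exactly where the standing hypothesis $n=2k\ge4$, equivalently $2\mid d$, is consumed: it rules out the degenerate value $k=1$ that would give periodicity ($\beta=0$) rather than fractional revival. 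The semi-bent case needs marginally more care than the bent case, since the Walsh-zero values of $f$ contribute the eigenvalue $1$ on $G_0$ alongside $1\pm2^{k+1}$, and one must confirm that these still share a single residue modulo the relevant power of $2$.
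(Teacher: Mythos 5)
Your treatment of the bent case is correct and follows essentially the same route as the paper's own proof: the same direct eigenvalue computation $\lambda_{(0,\mathbf{y})}=1-\widehat{f}_w(\mathbf{y})$, $\lambda_{(1,\mathbf{y})}=-1$, the congruence $\lambda_g\equiv 1+2^k\pmod{2^{k+1}}$ on $G_0$, and then Theorem~\ref{lem-2} at $t=\pi/2^k$. (Your sign $-e^{\imath\pi/2^k}$ on $G_0$ is in fact more careful than the paper, which writes $e^{\imath\pi/2^k}$; the slip is harmless, since $\alpha=-\imath\sin(\pi/2^k)$ and $\beta=-\cos(\pi/2^k)$ are both nonzero exactly when $k\ge 2$, as you say.)

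The semi-bent half of your proposal has a genuine gap, and it sits precisely at the step you deferred (``one must confirm that these still share a single residue modulo the relevant power of $2$''). On $G_0$ the eigenvalues are $1$ (from the Walsh zeros, which occur at $3\cdot 2^{n-2}$ points, hence at nonzero ones), $1\mp 2^{k+1}$ (from Walsh values $\pm 2^{k+1}$), and $1+2d$. These do all agree modulo $2^{k+1}$, as you claim, but constancy of $e^{\imath t\lambda_g}$ on $G_0$ requires $t$ to annihilate the eigenvalue differences, i.e.\ $t\cdot 2^{k+1}\in 2\pi\mathbb{Z}$; for the stated time $t=\pi/2^{k+1}$ one would need agreement modulo $2^{k+2}$, which fails: $e^{\imath t\cdot 1}=e^{\imath\pi/2^{k+1}}$ while $e^{\imath t(1+2^{k+1})}=-e^{\imath\pi/2^{k+1}}$. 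So condition (3) of Theorem~\ref{lem-2} is violated at $t=\pi/2^{k+1}$; worse, $H(\pi/2^{k+1})$ then has three distinct phases $e^{-\imath t}$, $e^{\imath t}$, $-e^{\imath t}$ among its eigenvalue multipliers (distinct since $k\ge 2$), whereas \eqref{f-81} forces exactly two values $\alpha\pm\beta$, so no FR occurs at that time between any pair of vertices. What your congruence modulo $2^{k+1}$ actually proves is FR in the semi-bent case at $t=\pi/2^{k}$ (the value ``read off from the common modulus'', as you put it), with $\alpha=\cos(\pi/2^k)$ and $\beta=\imath\sin(\pi/2^k)$. To be fair, the fault originates in the paper: its proof handles only the bent case and dismisses the semi-bent case as ``similar'', and the time $\pi/2^{k+1}$ stated there appears to be simply wrong. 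But since your proposal endorses that time and leaves exactly the failing verification unexecuted, it does not prove the statement as written; carried out honestly, your own divisibility check would have exposed the discrepancy.
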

\begin{proof}We just give the proof for the case that $f$ is a bent function. The case for the semi-bent functions can be proved similarly.
Obviously,
\begin{equation*}
  G_0=\{(0\mathbf{y}): \mathbf{y}\in \mathbb{F}_2^n\},  G_1=\{(1\mathbf{y}): \mathbf{y}\in \mathbb{F}_2^n\}.
\end{equation*}
It is easily seen that the eigenvalues of $\Gamma$ are
\begin{equation*}
  \lambda_{(x\mathbf{y})}=(-1)^x+((-1)^x+1)\sum_{\mathbf{s}\in S_0}(-1)^{\mathbf{s}\cdot \mathbf{y}}, x\in \mathbb{F}_2, \mathbf{y}\in \mathbb{F}_2^n.
\end{equation*}
Thus, for every $(1\mathbf{y})\in G_1$, $\lambda_{1\mathbf{y}}=-1$. Moreover, for $\mathbf{y}\neq \mathbf{0}$, $(0\mathbf{y})\in G_0$, the corresponding eigenvalue is
\begin{equation*}
  \lambda_{(0\mathbf{y})}=1-\hat{f}_w(\mathbf{y})=1\pm 2^k.
\end{equation*}
When $\mathbf{y}=\mathbf{0}_n$, the corresponding trivial eigenvalue is $\lambda_{(0\mathbf{0}_n)}=1+(2^n\pm 2^k)$.
One can check that $\lambda_{(0\mathbf{y})} \equiv 1+2^k \pmod {2^{k+1}}$. Take $t=\frac{\pi}{2^k}$. We have
\begin{equation*}
  e^{\imath t \lambda_\mathbf{x}}=\left\{\begin{array}{cc}
                                           e^{\imath \frac{\pi}{2^k}}, & \mbox{ if $\mathbf{x}\in G_0$}, \\
                                          e^{-\imath \frac{\pi}{2^k}}, & \mbox{ if $\mathbf{x}\in G_1$}.
                                         \end{array}
  \right.
\end{equation*}
This completes the proof.
\end{proof}
Note that when $k=2$ in the above Corollary \ref{bentconstruction}, we can choose a bent function $f(x_1x_2x_3x_4)=x_1x_2+x_3x_4$, then we have $supp(f)=\{(1100),(0011),(1011),(0111), (1101),(1110)\}$. In this case, we get the graph in Example \ref{exam-4.2}.

We can also use plateaued functions to construct cubelike graphs that admit FR. The main idea is that presented in Theorem \ref{thm-3.6}. The detail of which is omitted.

\section{Concluding remarks}\label{S5}

Using the spectral decomposition of adjacency matrices of abelian Cayley graphs, we characterized abelian Cayley graphs having FR. This characterization indicates potential abelian Cayley graphs admitting FR. We established a general construction of integral graphs ${\rm Cay}(\mathbb{Z}_{2^s}\times H,S)$ having FR. Meanwhile, we constructed several new families of abelian Cayley graphs having FR. The notation of plateaued functions over finite fields was generalized to finite abelian groups. Based on plateaued functions, a new family of abelian Cayley graphs having FR was built. This paper and reference \cite{Tan_2019} show that bent, semi-bent and plateaued functions are powerful ingredients in the constructions of abelian Cayley graphs having PST or FR. It is important to study properties of plateaued functions over finite abelian groups. Another particularly interesting direction is to construct Cayley graphs over non-abelian groups having FR.

\section*{Acknowledgement}


\bibliographystyle{elsarticle-num}

\bibliography{RefPST}

\end{document}